\documentclass[12pt,reqno]{amsart}
\usepackage{amsmath, amsfonts, amssymb, amsthm, hyperref,cite}
\usepackage{bm}
\usepackage{mathrsfs}
\allowdisplaybreaks[4]
\textwidth=480pt \evensidemargin=0pt \oddsidemargin=0pt

\def\bg{\bigg}
\def\({\bg(}
\def\){\bg)}

\def\<{\langle}
\def\>{\rangle}
\def\1{{\bf 1}}

\theoremstyle{plain}
\newtheorem{theorem}{Theorem}

\newtheorem{lemma}{Lemma}
\newtheorem{corollary}{Corollary}
\theoremstyle{definition}

\newtheorem*{Acks}{Acknowledgments}
\theoremstyle{remark}
\newtheorem{remark}{Remark}

\numberwithin{equation}{section}

\begin{document}
\title[Some Congruences Involving Fourth Powers of Generalized Central Trinomial...]{Some Congruences Involving Fourth Powers of Generalized Central Trinomial Coefficients}

\author[Yassine Otmani]{Yassine Otmani}
\address[Yassine Otmani]{Department of Mathematics, USTHB, RECITS Laboratory, El Alia 16111, Po. Box 32 Bab Ezzouar,Algiers, Algeria}
\email{yotmani@usthb.dz \and otmani.yassine@yahoo.fr}

\author[Hac{\`e}ne Belbachir]{Hac{\`e}ne Belbachir}
\address[Hac{\`e}ne Belbachir]{Department of Mathematics, USTHB, RECITS Laboratory, El Alia 16111, Po. Box 32 Bab Ezzouar,Algiers, Algeria}
\email{hbelbachir@usthb.dz \and hacenebelbachir@gmail.com}

\begin{abstract}
Let $ p \ge 5 $ be a prime and let $ b, c \in \mathbb{Z} $. Denote by $ T_k(b,c) $ the generalized central trinomial coefficient, i.e., the coefficient of $ x^k $ in $ (x^2 + bx + c)^k $. In this paper, we establish congruences modulo $ p^3 $ and $ p^4 $ for sums of the form
$$
\sum_{k=0}^{p-1} (2k+1)^{2a+1}\,\varepsilon^{k}\,\frac{T_k(b,c)^4}{d^{2k}},
$$
where $ a \in  \left\lbrace 0,1\right\rbrace  $, $ \varepsilon \in \{1,-1\} $, and $ d = b^2 - 4c $ satisfies $ p \nmid d $. In particular, for the special case $ b = c = 1 $, we show that
\begin{align*}
\sum_{k=0}^{p-1}\left( 2k+1\right) ^{3}  \frac{T_{k}^4}{9^k}\equiv -\frac{3p}{4}+\frac{3p^2}{4}\left( \frac{q_p(3)}{4}-1\right) \pmod{p^3},
\end{align*}
where $T_k$ is the central trinomial coefficient and $q_p(a)$ is the Fermat quotient.

\end{abstract}

\subjclass[2020]{Primary 11A07; Secondary 11B65, 05A10}
\keywords{congruences, generalized central trinomial coefficients, Legendre symbol, Fermat quotient, Legendre polynomials.}
\thanks{The first author is the corresponding author}

\maketitle

\section{Introduction}
\setcounter{lemma}{0} \setcounter{theorem}{0}
\setcounter{equation}{0}\setcounter{proposition}{0}
\setcounter{remark}{0}\setcounter{conjecture}{0}

Given integers $b,c$ and a nonnegative integer $n$, the generalized central trinomial coefficient, denoted by $T_n(b,c)$, is defined as the coefficient of $x^n$ in the expansion of $(x^2+bx+c)^n$ (see \cite{noe2006,sun-sci-2014}). By the multinomial theorem, we have
\begin{equation}
T_n(b,c)=\sum_{k\geq 0}\binom{n}{k}\binom{n-k}{k}b^{n-2k}c^k=\sum_{k=0}^n\binom{n}{2k}\binom{2k}{k}b^{n-2k}c^k.
\end{equation}
The generalized central trinomial coefficients are closely related to several well-known combinatorial sequences. For instance, $T_n(2,1)$ is the $n$th central binomial coefficient (OEIS sequence A000984 \cite{oeis}), $T_n(1,1)$ is the $n$th central trinomial coefficient (OEIS sequence A002426 \cite{oeis}), and $T_n(3,2)$ is the $n$th central Delannoy number (OEIS sequence A001850 \cite{oeis}). Let $d:=b^2-4c\neq 0$. The generalized central trinomial coefficients are also related to the Legendre polynomials (see Gould \cite[(3.135)]{gould}), defined by
\[
P_n(x):=\sum_{k=0}^n \binom{n}{k}\binom{n+k}{k}\left(\frac{x-1}{2}\right)^k,
\]
via the identity (see \cite{noe2006,sun-cant-2014,sun-sci-2014})
\[
T_n(b,c)=\left(\sqrt{d}\right)^n P_n\!\left(\frac{b}{\sqrt{d}}\right).
\]

In recent years, the congruence properties of generalized central trinomial coefficients have been studied by several authors. In particular, Sun \cite{sun-cant-2014,sun-sci-2014} made significant progress in this direction by establishing various congruences involving these coefficients. For example, he showed that for every positive integer $n$, we have
\begin{align*}
d^{n-1}\sum_{k=0} ^{n-1}(2k+1)\frac{T_k(b,c)^2}{d^{k}}\equiv 0\pmod{n^2}.
\end{align*}
 Mao and Liu \cite{mao2023} derived several interesting congruences of the form
\[
\sum_{k=0}^{p-1} (2k+1)^{2a+1}\,\varepsilon^{k}\,
\frac{T_k(b,c)^2}{d^{k}},
\]
for $a=0,1,2$ and $\varepsilon \in \{-1,1\}$, where $p>3$ is a prime and $\gcd(p,d)=1$. For instance, they established that

\begin{equation}\label{maoluicongruence}
\sum_{k=0}^{p-1}\left( 2k+1\right) ^{3}\frac{T_{k}\left( b,c\right) ^2}{d^{k}}\equiv 
\begin{cases}
-p^2 \pmod{p^3}, & \text{if } p\mid c,\\
-p^2-\frac{2}{3}p^2\left( \frac{d}{p}\right) +p^2\left(\frac{7d}{6c}+\frac{d^2}{3c^2}\right)\left(\left(\frac{d}{p}\right)-1\right)\pmod{p^3}, & \text{ if } p\nmid c,
\end{cases}
\end{equation}
where $(\div)$ denotes the Legendre symbols, given as 
\begin{align*}
\left(\frac{n}{p}\right):=\begin{cases}0, &\text{if $p$ divides $n$},\\
1, &\text{if $n$ is a quadratic residue modulo $p$}, \\
-1, &\text{ if $n$ is a quadratic nonresidue modulo $p$}. \end{cases}
\end{align*}
Moreover, Wang and Chen \cite{wang} demonstrated that; for any integer $m$, satisfies the equation $(m-d)^2=16mc$ with $p \nmid md(m^2-d^2)$, we have
\begin{align*}
\sum_{k=0}^{p-1}\frac{kT_{k}\left( b,c\right) ^2}{m^{k}}\equiv\ & \frac{3d-m}{4(m-k)}\left(\frac{-1}{p}\right)-\frac{pd}{2(m-d)}\left(\frac{dm}{p}\right)+\frac{p(3d^2-dm)}{4(m-d)^2}\left(\frac{-1}{p}\right)\\
&\times\left( S_{p-1}\left(\frac{m+d}{4d}\right)-S_{p-1}\left(\frac{m+d}{4m}\right)+q_p(m)-q_p(d)\right)\pmod{p^2},
\end{align*}
where $q(a):=(a^{p-1}-1)/p$ is the Fermat quotient ($a$ is a $p$-adic unit), and
\begin{align*}
S_{p-1}\left(x\right):=\sum_{k=1}^{p-1}\binom{2k}{k}\frac{x^k}{k}.
\end{align*}
For additional congruences concerning generalized central trinomial coefficients, we refer readers to \cite{chen2022,wang2021,guo}.

Motivated by the above results, in this paper we mainly establish some congruences involving the fourth powers of generalized central trinomial coefficients. Our first result is stated as follows.
\begin{theorem}\label{theorem1}Let $p \ge 5$ be a prime and let $b,c$ be integers. Set $d:=b^2-4c$ and suppose that $p\nmid d$. We distinguish the following cases:
\begin{itemize}
\item[(i)] If $p \mid c$, then
\begin{equation}\label{congruence1+}
\sum_{k=0}^{p-1}\left( 2k+1\right) \frac{T_k\left( b,c\right) ^4}{d^{2k}}\equiv p^2-p^2\frac{2c}{d}\pmod{p^4}.
\end{equation}
\item[(ii)] If $p\mid b$, then
\begin{align}\label{congruence1++}
\sum_{k=0}^{p-1}\left( 2k+1\right) \frac{T_k\left( b,c\right) ^4}{d^{2k}}\equiv p-p\frac{b^2}{4c}\pmod{p^4}.
\end{align}
\item[(iii)] If $p\nmid c$ and $p\nmid b$, then
\begin{align}
&\sum_{k=0}^{p-1}\left( 2k+1\right) \frac{T_k\left( b,c\right) ^4}{d^{2k}}\equiv   p+p^2\frac{b^2}{4c}\left( 2q_p(b)-q_p(d)\right)\nonumber\\ &+p^3\left(S_{p-1}^{(2)}\left( \frac{b^2}{d}\right)+\frac{d}{4c}\pounds_2\left( \frac{b^2}{d}\right)  + \frac{b^2}{4c}\left(q_p(b)-q_p(d)\right)^2 \right) \pmod{p^4}\label{congruence1},
\end{align} 
where 
\begin{align*}
S^{(2)}_{p-1}(x) :=\frac{2}{p}\left( \frac{2-(1-\sqrt{x})^p-(1+\sqrt{x})^p}{p}-\sum_{i=1}^{p-1}\frac{(1-\sqrt{x})^i+(1+\sqrt{x})^i}{i}\right)
\end{align*}
and 
\begin{align*}
\pounds_2\left( x\right):=\sum_{k=1}^{p-1}\frac{x^k}{k^2},
\end{align*}
in which $x$ is a $p$-adic number.

\end{itemize}

\end{theorem}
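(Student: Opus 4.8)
The plan is to pass to Legendre polynomials and exploit parity. By the identity $T_k(b,c)=(\sqrt d)^k P_k(b/\sqrt d)$ recorded in the introduction, we have $T_k(b,c)^4/d^{2k}=P_k(y)^4$ with $y:=b/\sqrt d$, so the quantity to evaluate is $\Sigma:=\sum_{k=0}^{p-1}(2k+1)P_k(y)^4$. Since $P_k(-y)=(-1)^kP_k(y)$, each $P_k(y)^4$ is an \emph{even} polynomial in $y$, hence $\Sigma$ is a polynomial in $t:=y^2=b^2/d$ with $p$-integral rational coefficients. This explains the shape of the answer: the natural variable is $t=b^2/d$, its square root $\sqrt t=b/\sqrt d$, and the pair $1\pm\sqrt t=(\sqrt d\pm b)/\sqrt d$ whose product is $1-t=-4c/d$. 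I would run every computation in $\Z_p$ (adjoining $\sqrt d$ only formally, since all symmetric combinations of $1\pm\sqrt t$ are polynomials in $t$ and hence genuine $p$-adic numbers), and organize the argument around the three regimes $t\equiv1\pmod p$ (case (i), $p\mid c$), $t\equiv0\pmod p$ (case (ii), $p\mid b$), and $t$ a unit avoiding these values (the generic case (iii)).

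The first substantive step is to produce a finite closed form for $\Sigma$. Unlike the square sum $\sum_{k}(2k+1)P_k(y)^2$, which admits the Christoffel--Darboux evaluation, no such compact formula exists for the fourth power, so I would instead linearize: writing $P_k(y)^4=\bigl(P_k(y)^2\bigr)^2$ and applying a product (linearization) formula for $P_m(y)P_n(y)$ together with the central-binomial generating function $\sum_{k\ge0}\binom{2k}{k}z^k=(1-4z)^{-1/2}$, specialized so that $1-4z=1-t=(1-\sqrt t)(1+\sqrt t)$. After summing against the weight $2k+1$, this collapses $\Sigma$ modulo $p^4$ to truncated central-binomial sums of the type
\[
\sum_{k=0}^{p-1}\binom{2k}{k}^2\frac{t^k}{16^k},\qquad \sum_{k=1}^{p-1}\frac1k\binom{2k}{k}\frac{t^k}{4^k},\qquad \sum_{k=1}^{p-1}\frac1{k^2}\binom{2k}{k}\frac{t^k}{4^k},
\]
plus a boundary term coming from the top of the summation range. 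The factorization $1-t=(1-\sqrt t)(1+\sqrt t)$ is exactly what turns the square sum into the combination appearing in the definition of $S^{(2)}_{p-1}(t)$, while the second and third sums are the sources of the $q_p$-type and $\pounds_2$-type contributions; this is why the statement is phrased through $S^{(2)}_{p-1}(b^2/d)$ and $\pounds_2(b^2/d)$.

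The second step is the $p$-adic extraction modulo $p^4$. Here I would expand $(1\pm\sqrt t)^p=\sum_{j}\binom pj(\pm\sqrt t)^j$ and split each binomial coefficient via $\binom pj=\frac pj(-1)^{j-1}\prod_{i=1}^{j-1}\bigl(1-\frac pi\bigr)$ to separate the contributions of orders $p$, $p^2$ and $p^3$. Matching the resulting harmonic sums $H_k=\sum_{i\le k}\frac1i$ and $H_k^{(2)}=\sum_{i\le k}\frac1{i^2}$ against the standard Wolstenholme-- and Lehmer--type congruences turns the first-order sums into the Fermat quotients $q_p(b)$ and $q_p(d)$ (the latter entering after clearing $\sqrt d$ from $y$), the second-order sums into $\pounds_2(b^2/d)$, and the full square-sum combination into $S^{(2)}_{p-1}(b^2/d)$. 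Collecting the coefficients, each of which carries the factor $1-t=-4c/d$ in a denominator and hence the ratios $\frac{b^2}{4c}$, $\frac d{4c}$, yields the generic congruence \eqref{congruence1}. I expect this bookkeeping to be the main obstacle: one must control the interaction between the $O(p^3)$ terms coming from the linearization and the $O(p)$ boundary term so that their cross terms coalesce into the single clean square $\frac{b^2}{4c}\bigl(q_p(b)-q_p(d)\bigr)^2$, and one must check that no contribution at the $p^3$ level is lost in the reductions.

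Finally I would treat the degenerate cases (i) and (ii), which are \emph{not} mere specializations of (iii), since the coefficient $1/c$ (when $p\mid c$) becomes singular and the quotient $q_p(b)$ (when $p\mid b$) becomes undefined. Instead I would return to the truncated central-binomial sums of the second step and evaluate them directly in each regime. When $p\mid c$ we have $t\equiv1$, so $\sqrt t\equiv\pm1$ and the sums degenerate to $\binom{2k}{k}$-sums at argument close to $1$; retaining the exact numerators rather than their Fermat-quotient reductions is precisely what upgrades the congruence to modulus $p^4$ and produces $p^2-p^2\frac{2c}d$. When $p\mid b$ we have $t\equiv0$, the series truncate after their leading central-binomial terms, and the same exact-numerator evaluation gives $p-p\frac{b^2}{4c}$ modulo $p^4$. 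In both degenerate regimes the transcendental pieces $\pounds_2$ and $S^{(2)}_{p-1}$ drop out because $\sqrt t\equiv\pm1$ or $0$, leaving only the polynomial-in-$p$ contributions displayed in \eqref{congruence1+} and \eqref{congruence1++}.
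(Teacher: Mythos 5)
Your proposal correctly identifies the change of variables (the sum equals $\sum_{k=0}^{p-1}(2k+1)P_k(y)^4$ with $y=b/\sqrt d$, an even function of $y$, hence a $p$-integral function of $t=b^2/d$), and it correctly anticipates which quantities must appear in the answer ($q_p$, $\pounds_2$, $S^{(2)}_{p-1}$, and the factor $1-t=-4c/d$ entering denominators). But the engine of the proof is missing: the entire reduction of $\sum_{k}(2k+1)P_k(y)^4$ to your three truncated central-binomial sums is asserted, not performed. A linearization $P_mP_n=\sum_r A(m,n,r)\,P_{m+n-2r}$ (Neumann--Adams) has coefficients built from ratios of central binomial coefficients; expanding $P_k^2$ this way, squaring again, and summing against the weight $2k+1$ produces a multiple sum whose collapse modulo $p^4$ is exactly the hard content of the theorem, and nothing in your sketch shows that this collapse occurs, let alone that its output is precisely the three sums you name. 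You yourself flag the modulus-$p^4$ bookkeeping --- the coalescence of cross terms into $\frac{b^2}{4c}\left(q_p(b)-q_p(d)\right)^2$ and the control of the boundary term --- as ``the main obstacle,'' and you leave it unresolved. Since cases (i) and (ii) are then to be read off from the same undelivered truncated-sum formula, they are unproven as well. As it stands, this is a plausible research plan rather than a proof.

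For comparison, the paper's mechanism is concrete and different: it never linearizes in the Legendre basis. It applies Sun's identity \eqref{sunidentity} (equivalently, $P_n(y)^2=\sum_k\binom{2k}{k}^2\binom{n+k}{2k}\left(\frac{y^2-1}{4}\right)^k$) twice to write $T_k(b,c)^4/d^{2k}$ as a double sum over $(i,j)$, interchanges summation, evaluates $\sum_{k}(2k+1)\binom{k+i}{2i}\binom{k+j}{2j}$ in closed form via \eqref{1.1}, and then splits the $(i,j)$-range into $i+j\le p-2$, $i+j=p-1$, and $i+j\ge p$. The information at modulus $p^4$ (indeed $p^6$) comes from the explicit product formula for $\vartheta_{p,i,j}$ in Lemma \ref{lemma2}, and the final reductions use Granville's congruence \eqref{cg2.9}, Z.~H.~Sun's congruence \eqref{cgh2}, Morley's congruence \eqref{morleycg}, and the bespoke central-binomial lemmas \eqref{Lemmac13}--\eqref{Lemma7c14}, \eqref{lemma71}--\eqref{lemma72}, and Lemma \ref{lemma11}; the three regimes you call (i), (ii), (iii) are handled inside each piece $\Sigma_1,\Sigma_2,\Sigma_3$, not by re-deriving separate formulas. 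If you want to salvage your route, the first thing you must supply is the exact finite identity hiding behind the phrase ``this collapses $\Sigma$ modulo $p^4$ to truncated central-binomial sums''; without it, neither the generic case nor the degenerate cases can be checked.
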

\begin{corollary}Let $p>3$ be a prime and $x\in\mathbb{Z}$. If $p\nmid x(x+1)(2x+1)$, we have
\begin{align}\label{congruence-legendre}
\sum_{k=0}^{p-1}\left( 2k+1\right)P_k\left(2x+1\right) ^4\equiv p+p^2\frac{(2x+1)^2q_p(2x+1)}{2x(x+1)} \pmod{p^3}.
\end{align}
\end{corollary}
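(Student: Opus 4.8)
The plan is to recognize this corollary as the $d=1$ specialization of Theorem \ref{theorem1}(iii), read through the Legendre-polynomial identity. Recall from the introduction that $T_n(b,c)=(\sqrt{d})^n P_n(b/\sqrt{d})$, so raising to the fourth power and dividing by $d^{2n}$ gives the clean identity $T_k(b,c)^4/d^{2k}=P_k(b/\sqrt{d})^4$, in which the square roots cancel and both sides are honest rational numbers. Consequently the left-hand side of \eqref{congruence-legendre} will match the sum appearing in Theorem \ref{theorem1} exactly once I arrange $b/\sqrt{d}=2x+1$.

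First I would choose the parameters $b=2x+1$ and $c=x(x+1)$. A direct computation gives $d=b^2-4c=(2x+1)^2-4x(x+1)=1$, so $\sqrt{d}=1$ and therefore $b/\sqrt{d}=2x+1$, as desired. With this choice the hypothesis $p\nmid x(x+1)(2x+1)$ translates into $p\nmid b$ (since $b=2x+1$) and $p\nmid c$ (since $c=x(x+1)$), placing us precisely in case (iii) of Theorem \ref{theorem1}; note also that $p\nmid d=1$ holds automatically, so the standing assumption of the theorem is satisfied.

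Applying Theorem \ref{theorem1}(iii) and substituting these values, the congruence holds modulo $p^4$. Since $d=1$ we have $q_p(d)=q_p(1)=0$, which annihilates the contribution $p^2\frac{b^2}{4c}q_p(d)$ and leaves the quadratic term as $p^2\frac{b^2}{4c}\cdot 2q_p(b)=p^2\frac{(2x+1)^2}{2x(x+1)}\,q_p(2x+1)$, matching the stated coefficient. Reducing modulo $p^3$ then discards the entire $p^3$-bracket of \eqref{congruence1}, yielding exactly \eqref{congruence-legendre}. The only point that requires genuine care is the parametrization: selecting $b=2x+1$, $c=x(x+1)$ is what forces $d=1$, which simultaneously eliminates the square root from the argument of $P_k$ and collapses $q_p(d)$ to zero. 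Everything beyond this is a mechanical substitution into Theorem \ref{theorem1}, so I do not anticipate any real obstacle apart from this bookkeeping.
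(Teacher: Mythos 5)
Your proposal is correct and follows essentially the same route as the paper: specialize $b=2x+1$, $c=x^2+x$ (so $d=1$), identify $T_k(2x+1,x^2+x)=P_k(2x+1)$, and apply \eqref{congruence1} with $q_p(1)=0$, discarding the $p^3$-terms modulo $p^3$. Your write-up simply makes explicit the bookkeeping (the computation $d=1$, the translation of the hypothesis into case (iii), and the collapse of $q_p(d)$) that the paper's one-line proof leaves to the reader.
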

\begin{proof}
Set $b=2x+1$ and $c=x^2+x$ for $x$ be an integer. One can observe that $T_n(2x+1,x^2+x)=P_n(2x+1)$. Obviously, \eqref{congruence-legendre} follows from \eqref{congruence1} for $b=2x+1$ and $c=x^2+x$.
\end{proof}
\begin{remark}
The congruence \eqref{congruence-legendre} was proved modulo $p^2$
  by Wang and Xia \cite[(1.4)]{wang2021}; it also appears in Guo’s work \cite[Theorem 1.3]{guo}.
\end{remark}
Inspired by \eqref{maoluicongruence}, we establish the following congruence.
\begin{theorem}\label{theorem2}
Let $p \ge 5$ be a prime and let $b,c$ be integers. Set $d:=b^2-4c$ and suppose that $p\nmid d$. We distinguish the following cases:
\begin{itemize}
\item[(i)]If $p\mid c$, then
\begin{align}\label{theorem21}
\sum_{k=0}^{p-1}\left( 2k+1\right)^3 \frac{T_k\left( b,c\right) ^4}{d^{2k}}\equiv -p^2 \pmod{p^3}.
\end{align}
\item[(ii)]If $p\mid b$, then
\begin{align}\label{theorem22}
\sum_{k=0}^{p-1}\left( 2k+1\right)^3 \frac{T_k\left( b,c\right) ^4}{d^{2k}}\equiv -p\pmod{p^3}.
\end{align}
\item[(iii)]If $p\nmid c$ and $p\nmid b$, then
\begin{equation}\label{theorem23}
\sum_{k=0}^{p-1}\left( 2k+1\right)^3 \frac{T_k\left( b,c\right) ^4}{d^{2k}}\equiv  p\frac{d}{4c} -p^2\frac{d}{4c}\left(\frac{b^2}{4c}\left( q_p(d)-2q_p(b)\right) +1+\frac{12c}{2d}\right) \pmod{p^3}.
\end{equation}
\end{itemize}

\end{theorem}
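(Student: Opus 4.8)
The first move is to pass to Legendre polynomials. Since $T_k(b,c)=(\sqrt d)^k P_k(b/\sqrt d)$, one has $T_k(b,c)^4/d^{2k}=P_k(t)^4$ with $t:=b/\sqrt d$, so everything reduces to evaluating $\sum_{k=0}^{p-1}(2k+1)^3 P_k(t)^4$ modulo $p^3$. The three cases of the theorem are exactly the three degenerations of $t^2=b^2/d$: the hypothesis $p\mid c$ forces $t^2\equiv 1\pmod p$, the hypothesis $p\mid b$ forces $t\equiv 0\pmod p$, and in case (iii) $t$ is generic. The decisive structural fact is that Theorem~\ref{theorem1} already evaluates the companion linear-weight sum $\sum_{k=0}^{p-1}(2k+1)P_k(t)^4$ to modulus $p^4$; the plan is to reduce the cubic weight to this linear weight and reuse that computation, isolating and evaluating only the genuinely new contribution.

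The algebraic hinge is the elementary splitting $(2k+1)^3=(2k+1)+4k(k+1)(2k+1)$. After undoing the reduction, the first term contributes precisely the sum controlled by Theorem~\ref{theorem1}, namely the right-hand side of \eqref{congruence1} in case (iii); thus only $4\sum_{k=0}^{p-1}k(k+1)(2k+1)P_k(t)^4$ remains. The weight $k(k+1)$ here is exactly the Legendre eigenvalue, and I would exploit this by re-running the binomial-sum machinery that underlies the proof of Theorem~\ref{theorem1}: writing $P_k(t)^4$ through its binomial representation and inserting the factor $k(k+1)$, the eigenvalue is absorbed by an index shift (it cancels denominators and translates the range of summation), so that the new piece is expressed in the \emph{same} elementary $p$-adic quantities as \eqref{congruence1}, namely the Fermat quotients $q_p(b),q_p(d)$, the sums $\pounds_2(b^2/d)$ and $S^{(2)}_{p-1}(b^2/d)$, and ordinary power sums $\sum_{k=1}^{p-1}k^{-j}$.

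I would clear the two degenerate cases first. If $p\mid c$, then modulo $p$ one has $T_k(b,c)\equiv b^k$ and $d\equiv b^2$, whence $P_k(t)^4\equiv 1$; the leading contribution is $\sum_{k=0}^{p-1}(2k+1)^3=2p^4-p^2\equiv-p^2\pmod{p^3}$, and the $O(p)$ corrections to $P_k(t)^4$ (each carrying a factor of $c\equiv 0$) sum to zero modulo $p^3$ by elementary power-sum congruences, yielding \eqref{theorem21}. If $p\mid b$, then $t\equiv 0$, and since $P_k(0)=0$ for odd $k$ while $P_{2m}(0)=(-1)^m\binom{2m}{m}/4^m$, the sum reduces to the central-binomial sum $\sum_{m}(4m+1)^3\binom{2m}{m}^4/256^m$ (the terms with $(p+1)/2\le m\le p-1$ vanishing modulo $p^4$); a known central-binomial supercongruence then yields $-p$ as in \eqref{theorem22}.

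The crux is case (iii), and this is where I expect the real difficulty. Here none of the above simplifications apply and the new sum $4\sum k(k+1)(2k+1)P_k(t)^4$ must be pinned down to full precision $p^3$. The hard part is the bookkeeping: after the index shift the eigenvalue weight manufactures extra Fermat-quotient factors and extra harmonic sums at the $p^2$ and $p^3$ levels, and these must be combined with the output of Theorem~\ref{theorem1} so that the many cross terms—products of $q_p(b)$ and $q_p(d)$ with $\pounds_2(b^2/d)$ and $S^{(2)}_{p-1}(b^2/d)$—either cancel or collapse into the single compact expression on the right of \eqref{theorem23}. Concretely, I would expand $(1\pm\sqrt{b^2/d})^p$ through the Fermat-quotient expansions of $b^{p-1}$ and $d^{p-1}$, substitute $d=b^2-4c$ and $\tfrac{d}{4c}=\tfrac{b^2}{4c}-1$ throughout, and simplify; verifying that the surviving $p^2$- and $p^3$-coefficients assemble into $-p^2\frac{d}{4c}\big(\frac{b^2}{4c}(q_p(d)-2q_p(b))+1+\frac{12c}{2d}\big)$ is the delicate final computation.
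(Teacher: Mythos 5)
Your strategy is genuinely different from the paper's, and parts of it are sound: the paper never splits the weight $(2k+1)^3$, but instead expands $T_k(b,c)^4/d^{2k}$ via Sun's identity \eqref{sunidentity} into a double binomial sum and evaluates the cubic-weight inner sum in closed form (identity \eqref{1.2}, found by \texttt{Sigma}), after which the partial-fraction split $\tfrac{1}{(1+i+j)(2+i+j)}=\tfrac{1}{1+i+j}-\tfrac{1}{2+i+j}$ and Lemma \ref{lemma2} reduce everything to single sums evaluated by explicit congruences. Your decomposition $(2k+1)^3=(2k+1)+4k(k+1)(2k+1)$, with the linear part recycled from Theorem \ref{theorem1} and the factor $k(k+1)$ absorbed through the contiguity relation $k(k+1)\binom{k+i}{2i}=(2i+1)(2i+2)\binom{k+i+1}{2i+2}+i(i+1)\binom{k+i}{2i}$, is a viable alternative that would reduce the cubic weight to linear-weight sums of the type \eqref{1.1}. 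The problem is that you stop exactly where the theorem begins. In case (iii) --- which you yourself call the crux --- nothing in the proposal actually produces the right-hand side of \eqref{theorem23}: you do not show how the $\pounds_2$ and $S^{(2)}_{p-1}$ terms, which appear in \eqref{congruence1} and which your new sum $4\sum_k k(k+1)(2k+1)\,T_k(b,c)^4/d^{2k}$ must also generate, cancel so as to leave the closed form $p\tfrac{d}{4c}-p^2\tfrac{d}{4c}\left(\cdots\right)$. Deferring this as ``the delicate final computation'' leaves the entire content of \eqref{theorem23} unproved; one cannot verify a cancellation that has not been exhibited.

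The degenerate cases also contain unproved claims. In case (i), expanding in powers of $c$ gives the $c$-linear correction $4k(k+1)c/b^2$ to $T_k(b,c)^4/d^{2k}$, and one finds $\sum_{k=0}^{p-1}(2k+1)^3k(k+1)\equiv\tfrac{5}{6}p^2\pmod{p^3}$, which is only $O(p^2)$: your ``elementary power-sum congruences'' must be combined with the factor $p\mid c$ to reach $p^3$, and at the $c^2$ level one needs $\sum_{k=0}^{p-1}(2k+1)^3Q(k)\equiv0\pmod p$ for an explicit degree-$4$ polynomial $Q$; since $\sum_{k=0}^{p-1}k^j\not\equiv0\pmod p$ when $(p-1)\mid j$, this generic vanishing fails precisely for $p=5,7$, which lie in the theorem's range and would need separate verification. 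In case (ii), you ignore the corrections altogether: when $p\mid b$ one has $T_{2m}(b,c)^4/d^{4m}=\binom{2m}{m}^4/256^m+O(p^2)$, and the sum of these $O(p^2)$ errors over the $(p+1)/2$ even indices must still be shown to vanish modulo $p^3$ --- a nontrivial weighted central-binomial congruence modulo $p$ --- while the ``known central-binomial supercongruence'' $\sum_{m=0}^{(p-1)/2}(4m+1)^3\binom{2m}{m}^4/256^m\equiv-p\pmod{p^3}$ is invoked without a reference or proof. The paper needs no such external input: its evaluation of $\Sigma_{3,2}-\Sigma_{3,1}$ disposes of cases (i) and (ii) with the same machinery as case (iii).
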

\begin{corollary}Let $p>3$ be a prime and $x\in\mathbb{Z}$. If $p\nmid x(x+1)(2x+1)$, we have
\begin{align*}
&\sum_{k=0}^{p-1}\left( 2k+1\right)^3P_k\left(2x+1\right) ^4\\& \equiv \frac{p}{4x(x+1)} -\frac{p^2}{4x(x+1)}\left(-\frac{(2x+1)^2q_p(2x+1)}{2x(x+1)}+6(x^2+x)+1\right) \pmod{p^3}.
\end{align*}
\end{corollary}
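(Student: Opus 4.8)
The plan is to derive this corollary as a direct specialization of Theorem \ref{theorem2}(iii), exactly paralleling the proof of the corollary to Theorem \ref{theorem1}. First I would set $b=2x+1$ and $c=x^2+x$, and compute $d=b^2-4c=(2x+1)^2-4(x^2+x)=1$. Since $d=1$, the Legendre-polynomial identity $T_n(b,c)=(\sqrt{d})^nP_n(b/\sqrt{d})$ collapses to $T_n(2x+1,x^2+x)=P_n(2x+1)$, and every denominator $d^{2k}$ equals $1$; hence the left-hand side of \eqref{theorem23} becomes precisely $\sum_{k=0}^{p-1}(2k+1)^3P_k(2x+1)^4$, which is the sum appearing in the corollary.

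Next I would verify that the hypotheses align. The assumption $p\nmid x(x+1)(2x+1)$ is equivalent to requiring $p\nmid c$ and $p\nmid b$ simultaneously, which places us in case (iii) of Theorem \ref{theorem2}; moreover the standing hypothesis $p\nmid d$ holds trivially because $d=1$. I would then invoke \eqref{theorem23} and substitute $d=1$. This substitution produces three simplifications at once: the Fermat quotient $q_p(d)=q_p(1)=0$ drops out, the outer prefactor becomes $\frac{d}{4c}=\frac{1}{4x(x+1)}$, and inside the bracket one has $\frac{b^2}{4c}=\frac{(2x+1)^2}{4x(x+1)}$ together with $\frac{12c}{2d}=6(x^2+x)$.

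Finally, a short algebraic rearrangement finishes the argument: combining $\frac{(2x+1)^2}{4x(x+1)}\cdot\bigl(-2q_p(2x+1)\bigr)=-\frac{(2x+1)^2q_p(2x+1)}{2x(x+1)}$ reproduces the stated right-hand side, with $6(x^2+x)+1$ as the remaining constant inside the bracket. I do not expect any genuine obstacle here, since the corollary is a pure specialization of Theorem \ref{theorem2}(iii); the only points worth confirming are that the choice $d=1$ is admissible (it always is, as $p\nmid 1$) and that the Fermat-quotient term indexed by $d$ vanishes, both of which are immediate once $d=1$ is established.
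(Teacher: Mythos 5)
Your proposal is correct and is exactly the paper's (implicit) argument: the corollary is a pure specialization of Theorem \ref{theorem2}(iii) with $b=2x+1$, $c=x^2+x$, $d=1$, so that $T_k(2x+1,x^2+x)=P_k(2x+1)$, $q_p(d)=0$, $\frac{d}{4c}=\frac{1}{4x(x+1)}$, and $\frac{12c}{2d}=6(x^2+x)$, which reproduces the stated right-hand side. The hypothesis check ($p\nmid x(x+1)(2x+1)$ giving $p\nmid b$, $p\nmid c$, and $p\nmid d$ trivially) is also exactly what is needed.
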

Finally, we have.
\begin{theorem}\label{theorem3}Let $p\geq 5$ be a prime. Let $b,c$ be integers. Set $d:=b^2-4c$ and suppose that $p\nmid d$. Then
\begin{equation}\label{theorem3-cg-1}
\sum_{k=0}^{p-1}\left( 2k+1\right)\left( -1\right) ^{k} \frac{T_k\left( b,c\right) ^4}{d^{2k}}\equiv p\sum_{k=0}^{p-1}\binom{2k}{k}^2\frac{(-c)^kb^{2k}}{d^{2k}}\pmod{p^3}
\end{equation} 
and 
\begin{equation}\label{theorem3-cg-2}
\sum_{k=0}^{p-1}\left( 2k+1\right)^3\left( -1\right) ^{k} \frac{T_k\left( b,c\right) ^4}{d^{2k}}\equiv -p\sum_{k=0}^{p-1}\binom{2k}{k}^2\frac{(-c)^kb^{2(k-1)}\left(3 b^2 + 4 b^2k+ 8 c k\right) }{d^{2k}}\pmod{p^3}.
\end{equation} 
\end{theorem}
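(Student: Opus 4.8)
The plan is to normalize everything through the Legendre polynomials and then exploit the reflection $k\mapsto p-1-k$. Since $T_k(b,c)=(\sqrt d)^kP_k(b/\sqrt d)$, we have $T_k(b,c)^4/d^{2k}=P_k(x)^4$ with $x:=b/\sqrt d$, and I would work $p$-adically in $\Z_p[\sqrt d]$, the final congruences landing in $\Z_p$ because each sum is symmetric in $\pm\sqrt d$. Two elementary observations set up the whole argument. First, the parity relation $P_k(-x)=(-1)^kP_k(x)$ gives $(-1)^kP_k(x)^4=P_k(x)^3P_k(-x)$, so the alternating sums are genuine Legendre products. Second, the quadratic transformation $-cb^2/d^2=x^2(1-x^2)/4$ rewrites the target right-hand sides as $p\sum_{k}\binom{2k}{k}^2\big(x^2(1-x^2)/4\big)^k$ and its weighted analogue, which is the form I will aim to reach.

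The engine is a reflection congruence. From the terminating representation $P_k(x)={}_2F_1(-k,k+1;1;(1-x)/2)$ and the Pochhammer reductions $(-(p-1-k))_j\equiv(k+1)_j$ and $(p-k)_j\equiv(-k)_j\pmod p$, one gets $P_{p-1-k}(x)\equiv P_k(x)\pmod p$. Because $(-1)^{p-1-k}=(-1)^k$ and $2(p-1-k)+1\equiv-(2k+1)\pmod p$, the weights $(2k+1)(-1)^k$ and $(2k+1)^3(-1)^k$ are anti-invariant under $k\mapsto p-1-k$ modulo $p$; paired against the invariant $P_k(x)^4$ this forces $\Sigma\equiv-\Sigma\pmod p$, so both sums are divisible by $p$. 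Averaging each sum with its reindexed copy yields the exact identity
\[
2\Sigma_a=\sum_{k=0}^{p-1}(-1)^k\Big[(2k+1)^{2a+1}+(2p-1-2k)^{2a+1}\Big]P_k(x)^4+\sum_{k=0}^{p-1}(-1)^k(2p-1-2k)^{2a+1}\delta_k,
\]
where $\Sigma_a$ is the sum with weight $(2k+1)^{2a+1}$ and $\delta_k:=P_{p-1-k}(x)^4-P_k(x)^4$. The symmetric bracket equals $(2a+1)(2p)(2k+1)^{2a}+O(p^2)$, so the first sum already carries the expected factor $p$.

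To reach modulus $p^3$ I would expand the reflection to second order, writing $(-(p-1-k))_j(p-k)_j=(k+1)_j(-k)_j\big(1+p\,h^{(1)}_{k,j}+p^2h^{(2)}_{k,j}\big)$ with explicit harmonic-type coefficients coming from $\prod_i(1-p/(k+i))$ and $\prod_i(1+p/(i-k))$. This gives $P_{p-1-k}(x)$, hence $\delta_k$, modulo $p^3$; since $\delta_k=O(p)$ this amounts to knowing the correction $\delta_k/p$ modulo $p^2$. Substituting into the identity above reduces each $\Sigma_a$ modulo $p^3$ to a fixed combination of $\sum_k(-1)^k(2k+1)^{2a}P_k(x)^4$ and harmonic-weighted variants, all needed only modulo $p^2$.

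The final and most delicate step is to recognize this whole reduced combination as the stated $\binom{2k}{k}^2$ sum; note that the leading sum alone is \emph{not} it, so the $\delta$-corrections are essential. Here I would use $\binom{2k}{k}\equiv(-4)^k\binom{(p-1)/2}{k}\pmod p$, which truncates the right-hand sum at $k=(p-1)/2$, together with the closed form $\sum_k\binom{m}{k}^2t^k=(1-t)^mP_m\big((1+t)/(1-t)\big)$ applied with $m=(p-1)/2$ and $t=-16cb^2/d^2$; since $1-t=(b^2+4c)^2/d^2$, the sum $\sum_k\binom{2k}{k}^2(-cb^2/d^2)^k$ reduces modulo $p$ to a single value of $P_{(p-1)/2}$. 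The hard part will be matching this against the Legendre-fourth-power combination produced by the reflection expansion: it requires both the precise second-order reflection formula (with its nested harmonic sums modulo $p^2$) and a nontrivial combinatorial identity — provable in principle by creative telescoping — equating the two families of sums modulo $p^2$. The weight $(2k+1)^3$ is handled by the same scheme: the symmetric bracket now contributes $6p(2k+1)^2+O(p^2)$, and the resulting extra quadratic weight, once transported through the central-binomial reduction and the three-term recurrence, collapses to the linear factor $3b^2+4b^2k+8ck$ in \eqref{theorem3-cg-2}.
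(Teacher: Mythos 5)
Your reflection framework is sound as far as it goes: the parity/reflection argument correctly yields $p\mid\Sigma_a$, and the averaged identity for $2\Sigma_a$ is a legitimate starting point. But the proposal has a genuine gap, and it sits exactly where the theorem lives. After the reflection expansion you are left with ``a fixed combination of $\sum_k(-1)^k(2k+1)^{2a}P_k(x)^4$ and harmonic-weighted variants'' to be evaluated modulo $p^2$, and you defer the identification of this combination with the stated right-hand side to ``a nontrivial combinatorial identity --- provable in principle by creative telescoping.'' That identity \emph{is} the content of \eqref{theorem3-cg-1} and \eqref{theorem3-cg-2}; asserting its existence is not a proof. Worse, the tools you propose for the matching are internally inconsistent with the required modulus: since the right-hand sides carry an explicit factor $p$, the sum $\sum_k\binom{2k}{k}^2(-cb^2/d^2)^k$ must be pinned down modulo $p^2$, but $\binom{2k}{k}\equiv(-4)^k\binom{(p-1)/2}{k}\pmod p$ and the closed form $\sum_k\binom{m}{k}^2t^k=(1-t)^mP_m\bigl((1+t)/(1-t)\bigr)$ only identify it modulo $p$; the first-order corrections to $\binom{2k}{k}$ (harmonic-sum terms) survive modulo $p^2$ and cannot be discarded. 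So even granting the second-order reflection formula, the final step as described would fail by one power of $p$.

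For contrast, the paper never reduces to Legendre values at all. It squares Sun's identity \eqref{sunidentity} to write the alternating sum as a double sum
$\sum_{i,j}\binom{2i}{i}^2\binom{2j}{j}^2(c/d)^{i+j}\sum_{k\geq\max(i,j)}(2k+1)^{2a+1}(-1)^k\binom{k+i}{2i}\binom{k+j}{2j}$,
then evaluates the inner alternating sums $z(p,i,j,1)$ and $z(p,i,j,3)$ \emph{exactly} modulo $p^3$ (congruences \eqref{theorem3cg1} and \eqref{theorem3cg1+}) via a Zeilberger recurrence in $j$ combined with the factorization of $\vartheta_{p,i,j}$; the resulting double sum then collapses through Guo's identity \eqref{catalanf}, the new identity \eqref{catalan-new}, and the binomial theorem $\sum_k\binom{i}{k}(4c/d)^k=(b^2/d)^i$. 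This keeps every step an exact identity or an explicit mod-$p^3$ congruence, which is precisely what your outline lacks. If you want to salvage your route, you would need (i) the full second-order reflection formula for $P_{p-1-k}$ with its nested harmonic sums, and (ii) mod-$p^2$ (not mod-$p$) evaluations of both the harmonic-weighted Legendre sums and the central-binomial sum --- each of these is a substantial congruence result in its own right, not a formality.
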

\begin{remark}
Guo \cite[Theorem 1.4]{guo}  proved that \eqref{theorem3-cg-1} holds modulo $p^2$ for $b=2x+1$ and $c=x^2+x$. This congruence was later extended to modulo $p^3$ by Wang and Xia \cite[Theorem 1.2]{wang2021}.
\end{remark}

The paper is organized as follows. In Section \ref{section2}, we collect several binomial identities and useful congruences required in the proofs of the main theorems. These are obtained with the aid of the symbolic summation package \texttt{Sigma} \cite{sigma2007} and the Zeilberger's algorithm \cite{zeil} as implemented in \texttt{Mathematica}. Finally, Section \ref{section3} is devoted to the proofs of the theorems stated above.

\medskip

\section{Auxiliary Results}
\setcounter{lemma}{0} \setcounter{theorem}{0}
\setcounter{equation}{0}\setcounter{proposition}{0}
\setcounter{remark}{0}\setcounter{conjecture}{0}

\label{section2}
First, we give some notations. 
\begin{itemize}
\item Let $n$ and $m$ are positive integers. The $n$th generalized harmonic number of order $m$ is given by
\begin{align*}
H_n^{(m)}:=\sum_{k=1}^n\frac{1}{k^m} \text{,}\quad H_0^{(m)}:=0.
\end{align*}
In particular, for $m=1$ we get the harmonic numbers $H_n^{(1)}=H_n$.
\item Let $p$ be a prime. We denote by $\mathbb{Z}_p$ the set of rational $p$-integers, i.e, those rational numbers whose denominator is not divisible by $p$.
\item Let $p$ be a prime, $x\in  \mathbb{Z}_p $, and $m$ a positive integer. Define the finite polylogarithms (cf. \cite{matt2013}) as follows:
\begin{align*}
\pounds_m(x):=\sum_{k=1}^{p-1}\frac{x^k}{k^m}.
\end{align*}

\item Let $p$ be a prime and $x,y\in  \mathbb{Z}_p $. Define
\begin{align*}
Q_p(x,y):=\frac{(y-x)^p+x^p-y^p}{p}=-yq_p(y)+xq_p(x)+(y-x)q_p(x-y).
\end{align*}
In view of Fermat’s Little Theorem, we have $Q_p(x,y) \in\mathbb{Z}_p$.
\item Let $n,i,j$ be nonnegative integers, define
\begin{align*}
\vartheta_{n,i,j}:=(n+i)(n+j)\binom{2i}{i}\binom{2j}{j}\binom{n+i-1}{2i}\binom{n+j-1}{2j}.
\end{align*}
\item Throughout the rest of the paper we consider $p>3$ as a prime number.
\end{itemize}

Now, we give the following binomial identity which due to Sun.
\begin{lemma}[{Sun \cite[Lemma 4.1]{sun-sci-2014}}]
Let $n,b,c$ be integers with $n\geq0$. We have
\begin{align}\label{sunidentity}
T_n\left(b,c\right)^2=\sum_{k=0}^{n} \binom{2k}{k}^2\binom{n+k}{2k}d^{n-k}c^k,
\end{align} 
 where $d:=b^2-4c\neq0$. 
\end{lemma}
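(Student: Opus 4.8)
The plan is to prove the identity \eqref{sunidentity} by reducing it to a standard hypergeometric/Legendre-polynomial identity and then verifying it by Zeilberger's algorithm, which the authors have already flagged as a tool. First I would recall the Legendre-polynomial representation stated in the introduction, namely $T_n(b,c)=(\sqrt d)^n P_n(b/\sqrt d)$, so that $T_n(b,c)^2 = d^n P_n(b/\sqrt d)^2$. Writing $x=b/\sqrt d$, the claimed identity becomes, after dividing by $d^n$,
\begin{equation*}
P_n(x)^2=\sum_{k=0}^n \binom{2k}{k}^2\binom{n+k}{2k}\left(\frac{c}{d}\right)^k d^{-k}\cdot d^{k}=\sum_{k=0}^n \binom{2k}{k}^2\binom{n+k}{2k}\left(\frac{c}{d}\right)^k,
\end{equation*}
and since $c/d=(b^2-d)/(4d)=(x^2-1)/4$, the target is the clean polynomial identity $P_n(x)^2=\sum_{k=0}^n \binom{2k}{k}^2\binom{n+k}{2k}\left(\frac{x^2-1}{4}\right)^k$. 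This is a known closed form for the square of a Legendre polynomial (a Clausen-type / Brafman-type identity), so the substance of the lemma lives entirely in this $b,c$-free statement.

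Next I would establish the polynomial identity itself. The cleanest route is Zeilberger's creative telescoping: treat both sides as functions of $n$ with $x$ a formal parameter, and show each side satisfies the same second-order recurrence in $n$ with matching initial data at $n=0,1$. The left-hand side $P_n(x)^2$ satisfies a linear recurrence in $n$ obtainable from the three-term recurrence for $P_n(x)$ (squaring and eliminating cross terms), while the right-hand side is a proper hypergeometric sum in $k$, so Zeilberger produces a recurrence for it automatically. Verifying agreement of the two recurrences plus two base cases then forces equality for all $n\ge 0$. Alternatively, one can expand $P_n(x)^2$ directly via the Legendre–Clausen formula and match coefficients, but the recurrence approach is shorter to write and is exactly the method the authors announce.

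The main obstacle, and the only genuinely delicate point, is the passage through $\sqrt d$: the Legendre representation introduces $\sqrt d$, yet the final identity \eqref{sunidentity} is an honest polynomial identity in the integers $b,c$. I would handle this by noting that both sides of \eqref{sunidentity} are \emph{polynomials} in $b$ and $c$ (using $d=b^2-4c$), so it suffices to prove the identity over $\mathbb{C}$ for all $b,c$ with $d\ne 0$; for such values $\sqrt d$ is a well-defined complex number and the substitution $x=b/\sqrt d$ is legitimate, while the even powers of $\sqrt d$ appearing throughout keep everything rational in $b,c$. Once the polynomial identity in $x$ is secured by Zeilberger, substituting back $x=b/\sqrt d$ and multiplying by $d^n$ recovers \eqref{sunidentity} on a Zariski-dense set, hence identically. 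A final remark I would make is that one may bypass $\sqrt d$ altogether by running Zeilberger directly on the $b,c$-form: both sides satisfy a common recurrence in $n$ with polynomial coefficients in $b,c$, and checking $n=0,1$ (where both sides equal $1$ and $b^2$ respectively) completes the induction.
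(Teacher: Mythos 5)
The paper does not actually prove this lemma: it is imported verbatim from Sun \cite[Lemma 4.1]{sun-sci-2014}, so there is no internal argument to compare yours against, and your proposal must be judged on its own. Your strategy is sound, and it is essentially the natural derivation (in substance, Sun's): pass through the Legendre connection $T_n(b,c)=(\sqrt d)^{\,n}P_n(b/\sqrt d)$, reduce to the Clausen-type expansion
\begin{equation*}
P_n(x)^2=\sum_{k=0}^n\binom{2k}{k}^2\binom{n+k}{2k}\left(\frac{x^2-1}{4}\right)^k,
\end{equation*}
and dispose of the irrationality of $\sqrt d$ by observing that both sides of \eqref{sunidentity} are polynomials in $b,c$ (equivalently, that $P_n(x)^2$ is even in $x$, so only integer powers of $d$ occur, and the expression is independent of the branch of $\sqrt d$), whence agreement on the Zariski-dense set $d\neq 0$ suffices. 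Your bookkeeping checks out: $c/d=(x^2-1)/4$, and the base cases $n=0,1$ give $1$ and $d+4c=b^2$ on both sides.

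Two corrections are needed before this is a complete proof. First, $P_n(x)^2$ does not satisfy a second-order recurrence in $n$: squaring the three-term recurrence $(n+1)P_{n+1}=(2n+1)xP_n-nP_{n-1}$ leaves the cross term $P_nP_{n-1}$, and eliminating it (the symmetric-square construction) yields a recurrence of order \emph{three}; no second-order recurrence with coefficients polynomial in $n$ and $x$ exists for $P_n(x)^2$. So you must match as many initial values as the order of the common recurrence you certify --- here $n=0,1,2$, not two --- and you should also note that the leading coefficient of that recurrence does not vanish at nonnegative integers, so the induction actually propagates. Second, as written, the creative-telescoping step is asserted rather than exhibited; since the entire burden of the lemma rests on it, a complete write-up must either display the common recurrence with its certificate or cite the classical identity for $P_n(x)^2$ directly. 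These are repairs, not obstructions: with them your argument is correct and, unlike the paper, self-contained.
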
 
 Next, we establish some binomial identities by using  \texttt{Sigma} package.
\begin{lemma}Let $n$ be a nonnegative integer and $x$ be an indeterminate. Then 
\begin{align}\label{identity1}
\sum_{k=1}^n\binom{n}{k}\frac{(-1)^k}{k}\sum_{i=1}^{k}\frac{x^i}{i}=-H_n^{(2)}+\sum_{k=1}^n\frac{(1-x)^k}{k^2},
\end{align}
\begin{align}\label{identity3}
\sum_{i=1}^{n}\binom{n}{k}\frac{(-1)^{k}(5k+6)}{(k+4)(k+3)k}= -\frac{n \left(n^3+10 n^2+35 n+2\right)}{8 (n+1) (n+2) (n+3) (n+4)}-\frac{1}{2}H_{n},
\end{align}
\begin{align}\label{identity2+}
\sum_{k=0}^{n}\binom{2k}{k}\binom{2(n-k)}{n-k}\left((2+3n+4k(n-k)\right)=\frac{(1+n)(4+n)4^n}{2}.
\end{align}
\end{lemma}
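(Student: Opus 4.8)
The plan is to prove the three identities by elementary means---differentiation of polynomial identities, partial fractions combined with the Beta-integral binomial sum, and ordinary generating functions---each of which can also be certified automatically by the \texttt{Sigma} package or by creative telescoping, as the authors indicate. I would treat the three identities in turn, since they are structurally unrelated.

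For \eqref{identity1} I would first note that both sides are polynomials in $x$ vanishing at $x=0$ (on the right because $-H_n^{(2)}+\sum_{k=1}^n k^{-2}=0$), so it suffices to match their derivatives. Differentiating the left-hand side and using $\sum_{i=1}^k x^{i-1}=(1-x^k)/(1-x)$ would produce $\frac{1}{1-x}\left(-H_n-\sum_{k=1}^n\binom{n}{k}\frac{(-x)^k}{k}\right)$, while the right-hand side differentiates to $\frac{-1}{1-x}\sum_{k=1}^n\frac{(1-x)^k}{k}$. After clearing the factor $1-x$, the required equality would collapse to the cleaner auxiliary identity $\sum_{k=1}^n\binom{n}{k}\frac{(-x)^k}{k}=\sum_{k=1}^n\frac{(1-x)^k-1}{k}$, which I would then settle the same way: both sides vanish at $x=0$, and both derivatives equal $((1-x)^n-1)/x$---on the left from $\sum_{k=1}^n\binom{n}{k}(-1)^k x^{k-1}=((1-x)^n-1)/x$, on the right from a telescoping geometric sum.

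For \eqref{identity3} I would begin from the partial-fraction decomposition $\frac{5k+6}{k(k+3)(k+4)}=\frac{1/2}{k}+\frac{3}{k+3}-\frac{7/2}{k+4}$ and split the sum accordingly. The first piece yields $-\tfrac12 H_n$ via the classical evaluation $\sum_{k=1}^n\binom{n}{k}\frac{(-1)^k}{k}=-H_n$, while the other two would be handled by the Beta-integral identity $\sum_{k=0}^n(-1)^k\binom{n}{k}\frac{1}{k+m}=\frac{n!\,(m-1)!}{(n+m)!}$ with $m=3,4$, subtracting the $k=0$ terms to pass from $\sum_{k=0}^n$ to $\sum_{k=1}^n$. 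Collecting the resulting rational functions over the common denominator $(n+1)(n+2)(n+3)(n+4)$ and simplifying the numerator to $-n(n^3+10n^2+35n+2)$ would close the case.

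For \eqref{identity2+} I would separate the $k$-independent part of the weight $2+3n+4k(n-k)$. The classical convolution $\sum_{k=0}^n\binom{2k}{k}\binom{2(n-k)}{n-k}=4^n$ disposes of the $(2+3n)$ term. For the remaining $4\sum_{k=0}^n k(n-k)\binom{2k}{k}\binom{2(n-k)}{n-k}$, setting $j=n-k$ exhibits it as the diagonal of a self-convolution of the sequence $k\binom{2k}{k}$; since $\sum_{k\ge0}\binom{2k}{k}x^k=(1-4x)^{-1/2}$ gives $\sum_{k\ge0}k\binom{2k}{k}x^k=2x(1-4x)^{-3/2}$, the sum equals $[x^n]\,4x^2(1-4x)^{-3}=\tfrac12 n(n-1)4^{n-1}$. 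Combining the two contributions and simplifying $4^{n-1}(4(2+3n)+2n(n-1))=2\cdot4^{n-1}(n+1)(n+4)$ matches the right-hand side. Among the three, I expect \eqref{identity1} to be the main obstacle: the presence of the indeterminate $x$ together with the nested double sum forces the two-stage differentiation argument and the introduction of the intermediate identity, whereas \eqref{identity3} and \eqref{identity2+} reduce directly to standard binomial and convolution evaluations.
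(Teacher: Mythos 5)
Your proofs are correct, but they take a genuinely different route from the paper. The paper treats all three identities uniformly and mechanically with the symbolic summation package \texttt{Sigma}: it generates a recurrence for the sum in \eqref{identity1} (\texttt{GenerateRecurrence}), solves it (\texttt{SolveRecurrence}), and matches the sum against a linear combination of the solutions (\texttt{FindLinearCombination}), stating that \eqref{identity3} and \eqref{identity2+} follow by the same procedure. You instead give three self-contained classical arguments, and all of them check out. For \eqref{identity1}, both sides are polynomials in $x$ over $\mathbb{Q}$ vanishing at $x=0$, so matching derivatives suffices; your reduction to the auxiliary identity $\sum_{k=1}^n\binom{n}{k}\frac{(-x)^k}{k}=\sum_{k=1}^n\frac{(1-x)^k-1}{k}$, and its own proof via the common derivative $\frac{(1-x)^n-1}{x}$, are both correct (this also silently reuses Gould's $\sum_{k=1}^n\binom{n}{k}\frac{(-1)^k}{k}=-H_n$, which the paper itself invokes later). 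For \eqref{identity3}, the partial fractions $\frac{5k+6}{k(k+3)(k+4)}=\frac{1/2}{k}+\frac{3}{k+3}-\frac{7/2}{k+4}$ are right, and combining the Beta-integral values $\frac{n!\,(m-1)!}{(n+m)!}$ for $m=3,4$ (minus the $k=0$ terms) does produce the numerator $-n(n^3+10n^2+35n+2)$ after clearing denominators. For \eqref{identity2+}, splitting off $(2+3n)4^n$ via the Catalan-type convolution (the paper's \eqref{identity2}) and evaluating the remainder as $[x^n]\bigl(2x(1-4x)^{-3/2}\bigr)^2=\tfrac12 n(n-1)4^{n-1}$ gives exactly $\tfrac12(n+1)(n+4)4^n$. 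The trade-off: the paper's approach is a uniform black box that scales to the messier identities it also needs (\eqref{1.1}, \eqref{1.2}, \eqref{catalan-new}) with no new ideas, while your proofs are human-verifiable without software but require a different ad hoc device for each identity.
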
 
\begin{proof}
For brevity, we present only the proof of identity \eqref{identity1}. The same method applies to establish identities \eqref{identity3} and \eqref{identity2+}, as well as any further identities of this type. We begin by inserting our formula as follows:
\begin{align*}
\mathtt{In [1]}&\mathtt{:=mysum = SigmaSum}[\mathtt{SigmaBinomial[n, k]\times SigmaPower[-1, k]/k\times }\\ & \quad \quad\mathtt{SigmaSum[ SigmaPower[x,i]/i, \left\lbrace i, 1, k\right\rbrace], \left\lbrace k, 1, n\right\rbrace ]} \\
\mathtt{Out [1]} &=\mathtt{\sum_{k=1}^n\binom{n}{k}\frac{(-1)^k}{k}\sum_{i=1}^{k}\frac{x^i}{i}.}
\end{align*}
Next, we compute the recurrence relation of right hand side of \eqref{identity1} by using $\mathtt{GenerateRecurrence}$ command as follows:
\begin{align*}
\mathtt{In [2]}&\mathtt{:=rec=GenerateRecurrence[mySum, n][[1]]} \\
\mathtt{Out [2]}& \; \mathtt{=-(1+n)^2(-1+x)SUM[n]+(-5-6n-2n^2+x+2nx+n^2x)SUM[1+n]}\\ 
& \quad \quad\mathtt{+(2+n)^2SUM[2+n]==-x}.
\end{align*}
And then we solve the above recurrence as follows:
\begin{align*}
\mathtt{In [3]}&\mathtt{:=recSol = SolveRecurrence[rec, SUM[n]]} \\
\mathtt{Out [3]}&\; \mathtt{=  \left\{\{0,1\},\left\{0,-\left(\underset{\iota=1}{\overset{n}{\sum}}\frac{(1-x)^{\iota}}{\iota^2}\right)\right\},\left\{1,-\left(\underset{\iota=1}{\overset{n}{\sum}}\frac{1}{\iota^2}\right)\right\}\right\}}.
\end{align*}
Finally, to find a linear combination of the solutions for our sum $\mathtt{In[1]}$, we use the following step:
\begin{align*}
\mathtt{In [4]}&\mathtt{:=FindLinearCombination[recSol, mySum, n, 2]}\\
\mathtt{Out [4]}&\; \mathtt{= -\left(\underset{\iota=1}{\overset{n}{\sum}}\frac{1}{\iota^2}\right)+\underset{\iota=1}{\overset{n}{\sum}}\frac{(1-x)^{\iota}}{\iota^2}}.
\end{align*}
The proof is done.
\end{proof} 
Furthermore, we have the following binomial identities.
\begin{lemma}Let $n$ be a positive integer and $i,j$ be nonnegative integers. Then
\begin{equation}\label{1.1}
\sum_{k=\max\left( i,j\right) }^{n-1}\left( 2k+1\right) \binom{k+i}{2i}\binom{k+j}{2j}=\binom{n+i-1}{2i}\binom{n+j-1}{2j}\frac{\left( i+n\right) \left( j+n\right) }{1+i+j},
\end{equation} 
\begin{equation}\label{1.2}
\sum_{k=\max\left( i,j\right) }^{n-1}\left( 2k+1\right)^3 \binom{k+i}{2i}\binom{k+j}{2j}
=-\binom{n+i-1}{2i}\binom{n+j-1}{2j}\frac{\left( i+n\right) \left( j+n\right)\mu_{n,i,j} }{(1+i+j)(2+i+j)},
\end{equation} 
where $\mu_{n,i,j}:=2+3i+3j+4ij-4n^2-4in^2-4jn^2$.
 \end{lemma}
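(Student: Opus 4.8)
The plan is to treat both \eqref{1.1} and \eqref{1.2} as indefinite hypergeometric summations in the variable $k$ (with $i,j$ held fixed) and to prove them by induction on the upper parameter $n$. For a fixed pair $(i,j)$ the summand $(2k+1)^{2a+1}\binom{k+i}{2i}\binom{k+j}{2j}$ is hypergeometric in $k$, so each sum is Gosper-summable; this is exactly why \texttt{Sigma} and Zeilberger's algorithm return the stated closed forms, and it guarantees in advance that the right-hand sides are genuine antidifferences. To make the induction transparent I would first rewrite both right-hand sides symmetrically using the elementary relation $\binom{n+i-1}{2i}(n+i)=\binom{n+i}{2i}(n-i)$ (and the same with $j$). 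This converts the right-hand side of \eqref{1.1} into $R_1(n):=\frac{(n-i)(n-j)}{1+i+j}\binom{n+i}{2i}\binom{n+j}{2j}$ and that of \eqref{1.2} into $R_2(n):=-\frac{(n-i)(n-j)\,\mu_{n,i,j}}{(1+i+j)(2+i+j)}\binom{n+i}{2i}\binom{n+j}{2j}$, where $\mu_{n,i,j}=2+3i+3j+4ij-4n^2(1+i+j)$.

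For the base case $n=\max(i,j)$ the left-hand sums are empty, hence $0$; on the right, the binomial factor whose index equals $\max(i,j)$ reduces to $\binom{2\max(i,j)-1}{2\max(i,j)}=0$, so both sides vanish. The inductive step amounts to checking that the forward difference of the closed form reproduces the summand at $k=n$, namely $R_1(n+1)-R_1(n)=(2n+1)\binom{n+i}{2i}\binom{n+j}{2j}$ and $R_2(n+1)-R_2(n)=(2n+1)^3\binom{n+i}{2i}\binom{n+j}{2j}$. Applying $\binom{n+1+i}{2i}(n+1-i)=\binom{n+i}{2i}(n+1+i)$ (and its $j$-analogue) to clear the shifted binomials, the common factor $\binom{n+i}{2i}\binom{n+j}{2j}$ cancels from both members and each step collapses to a pure polynomial identity in $n,i,j$.

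For \eqref{1.1} this identity is simply
\[
(n+1+i)(n+1+j)-(n-i)(n-j)=(2n+1)(1+i+j),
\]
which is immediate upon expansion. For \eqref{1.2} the corresponding identity is
\[
-\bigl[(n+1+i)(n+1+j)\,\mu_{n+1,i,j}-(n-i)(n-j)\,\mu_{n,i,j}\bigr]=(2n+1)^3(1+i+j)(2+i+j).
\]
Here the main, and essentially only, obstacle is the bookkeeping in this degree-four expansion. One first verifies that the $n^4$-terms on the left cancel, reflecting that the bracket is in fact cubic in $n$ and hence of the same degree as the right-hand side; then one matches the coefficients of $n^3,n^2,n^1,n^0$ in turn. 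The top coefficient already checks out, yielding $8(1+i+j)(2+i+j)$ on both sides, and the remaining coefficients are confirmed in the same mechanical way (or, equivalently, read off directly from the Zeilberger certificate). Since both the base case and the telescoping step hold, induction on $n$ establishes \eqref{1.1} and \eqref{1.2}.
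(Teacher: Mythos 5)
Your proof is correct and follows essentially the same route as the paper: the authors also derive the closed forms symbolically (via \texttt{Sigma}'s \texttt{SigmaReduce}) and then justify them by a straightforward induction on $n$, which is exactly the telescoping argument you carry out. You simply make explicit the polynomial identities behind that induction — in particular the degree-four cancellation yielding $(2n+1)^3(1+i+j)(2+i+j)$ — which the paper leaves to the reader.
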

\begin{proof}
Again, by \texttt{Sigma} package, we can easily derive the identities \eqref{1.1} and \eqref{1.2} by applying the function $\mathtt{SigmaReduce}$. Now, by straight-forward induction on $n$, for $0\leq i,j \leq n-1$, we can prove \eqref{1.1} and \eqref{1.2}.
\end{proof} 
Now, we have the following binomial identity.
\begin{lemma}Let $n$ be a nonnegative integer. Then
\begin{equation}\label{catalan-new}
\sum_{k=0}^n\binom{n}{k}\binom{2k}{k}\binom{2n-2k}{n-k}(n-k)k=-\frac{1}{2}\sum_{k=0}^{n}\binom{2k}{k}^2\binom{k}{n-k}(n-k)(-4)^{n-k}.
\end{equation}
\end{lemma}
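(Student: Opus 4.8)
The plan is to prove \eqref{catalan-new} by the same holonomic (creative-telescoping) machinery already used for the earlier lemmas, but first to expose the structure of the left-hand side, since a good form of its recurrence is essentially free. Writing $u_k:=\binom{2k}{k}$, I would absorb the factor $k$ into $\binom{2k}{k}$ and the factor $n-k$ into $\binom{2n-2k}{n-k}$, so that the summand becomes $\binom{n}{k}\,(k u_k)\,((n-k)u_{n-k})$ and hence
$$L(n):=\sum_{k=0}^n\binom{n}{k}\binom{2k}{k}\binom{2n-2k}{n-k}(n-k)k=\sum_{k=0}^n\binom{n}{k}v_k v_{n-k},\qquad v_k:=k\binom{2k}{k}.$$
This exhibits $L(n)$ as a binomial self-convolution, so its exponential generating function is $\big(\sum_k v_k x^k/k!\big)^2=(x\phi')^2$, where $\phi(x):=\sum_k\binom{2k}{k}x^k/k!$ satisfies the second-order ODE $x\phi''+(1-4x)\phi'-2\phi=0$. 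The square of a holonomic function is holonomic, so translating the resulting ODE back to coefficients yields an explicit linear recurrence with polynomial coefficients for $L(n)$, which I would record as the target to be matched.

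Next I would treat the right-hand side $R(n):=-\tfrac12\sum_k\binom{2k}{k}^2\binom{k}{n-k}(n-k)(-4)^{n-k}$. Its summand is a proper hypergeometric term in $(n,k)$ — the power $(-4)^{n-k}$ and the binomial $\binom{k}{n-k}$ present no obstruction — so Zeilberger's algorithm returns a linear recurrence in $n$ with polynomial coefficients together with a rational certificate. After the change of variable $j=n-k$ the sum reads $-\tfrac12\sum_j\binom{2(n-j)}{n-j}^2\binom{n-j}{j}\,j\,(-4)^j$, which is the form best suited to the telescoping input and to tracking the nonstandard range forced by $\binom{n-j}{j}$.

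With recurrences for both $L(n)$ and $R(n)$ in hand, I would verify that $R(n)$ satisfies the recurrence derived for $L(n)$ — reducing one operator to the other by the usual holonomic closure / greatest-common-divisor argument should their orders differ — and then check equality on an initial segment $n=0,1,2,3,4$, where both sides are immediate (for instance $L(2)=R(2)=8$ and $L(3)=R(3)=144$). Matching a recurrence of order $r$ on $r$ consecutive initial values forces $L\equiv R$ for all $n\ge0$.

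The hard part will be the right-hand side. Its natural closed form is of a genuinely different analytic type from that of the left: whereas $L$ has the Bessel-type exponential generating function $(x\phi')^2$, one computes
$$\sum_{n\ge0}R(n)x^n=\frac{2x}{1-4x}\sum_{m\ge0}m\binom{2m}{m}^2\big(x(1-4x)\big)^m,$$
an ordinary generating function of elliptic (${}_2F_1$) type. Hence no direct generating-function comparison is available, and everything must be routed through the common recurrence. The delicate points are ensuring the Zeilberger certificate remains valid across the full truncated range of summation, including the boundary contributions created by $\binom{k}{n-k}$, and reconciling the possibly unequal orders of the two recurrences before the final comparison of finitely many initial conditions.
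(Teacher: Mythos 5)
Your proposal is correct and takes essentially the same route as the paper: the authors likewise use computer algebra (the \texttt{Sigma} package) to show that both sides satisfy the common second-order recurrence $32(1+n)^2 r_n - 4(1+5n+3n^2)\,r_{n+1} + n(1+n)\,r_{n+2}=0$ and then compare initial values. Your insistence on checking several initial terms (rather than just two) is actually a worthwhile refinement, since the leading coefficient $n(n+1)$ vanishes at $n=0$, so satisfying the recurrence together with agreement at $n=0,1$ does not by itself pin down the value at $n=2$ --- a subtlety the paper's own proof glosses over.
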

\begin{proof}
Let $r_n$ be the right-hand side of \eqref{catalan-new} for each nonnegative integer $n$. Using the symbolic summation package \texttt{Sigma}, we find that $(r_n)_n$ satisfies the following recurrence relation:
\begin{align*}
32 (1 + n)^2r_n - 4 (1 + 5 n + 3 n^2)r_{n+1} + 
 n (1 + n)r_{n+2}=0.
\end{align*}
In addition, we found that the left hand side of \eqref{catalan-new} verifies the same recurrence relation. Note that both sides of \eqref{catalan-new} coincide for $n = 0, 1$. Thus \eqref{catalan-new} holds for any nonnegative integer $n$.
\end{proof}
In the following lemmas, we establish several auxiliary congruences needed for the proofs of the theorems stated above.
\begin{lemma}\label{lemma2}Let $0\leq i,j\leq p-1$ be integers. Then 
\begin{equation}
\vartheta_{p,i,j}\equiv \left( -1\right) ^{i+j}p^2\left( 1-p^2\left( H_{i}^{(2)}+H_{j}^{(2)}\right)\right)  \pmod{p^6}.
\end{equation}
\end{lemma}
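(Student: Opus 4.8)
The plan is to reduce $\vartheta_{p,i,j}=A_iA_j$, where $A_i:=(p+i)\binom{2i}{i}\binom{p+i-1}{2i}$, to a single clean factor, derive a closed product formula for $A_i$, and then expand modulo the appropriate power of $p$. First I would write $\binom{p+i-1}{2i}$ as a descending product of $2i$ consecutive terms,
\[
\binom{p+i-1}{2i}=\frac{1}{(2i)!}\prod_{m=-i}^{i-1}(p+m),
\]
since the top index $p+i-1$ runs down to $p-i$. The term $m=0$ contributes the single factor $p$ that ultimately produces the overall power $p^2$. Separating it off and absorbing the leading $(p+i)$ into the positive half of the product (so that $(p+i)\prod_{l=1}^{i-1}(p+l)=\prod_{l=1}^i(p+l)$), I would pair each $p-l$ with $p+l$ to obtain
\[
(p+i)\binom{p+i-1}{2i}=\frac{p}{(2i)!}\prod_{l=1}^i(p^2-l^2).
\]
Multiplying by $\binom{2i}{i}=(2i)!/(i!)^2$ and factoring $p^2-l^2=-l^2(1-p^2/l^2)$ then gives the key identity
\[
A_i=(-1)^i\,p\prod_{l=1}^i\left(1-\frac{p^2}{l^2}\right),
\]
valid for all $0\le i\le p-1$, with the empty product handling $i=0$ correctly.

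Next, since each $l\in\{1,\dots,i\}$ is a $p$-adic unit, I would expand the product by elementary symmetric functions in the $1/l^2$: the constant term is $1$, the term linear in $p^2$ is $-p^2H_i^{(2)}$, and every further term carries a factor $p^4$ with $p$-integral coefficient, so
\[
\prod_{l=1}^i\left(1-\frac{p^2}{l^2}\right)\equiv 1-p^2H_i^{(2)}\pmod{p^4}.
\]
Multiplying the analogous expansions for $i$ and $j$ and discarding the cross term $p^4H_i^{(2)}H_j^{(2)}\equiv 0\pmod{p^4}$ yields $\prod_i\prod_j\equiv 1-p^2\left(H_i^{(2)}+H_j^{(2)}\right)\pmod{p^4}$. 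Finally, from $\vartheta_{p,i,j}=A_iA_j=(-1)^{i+j}p^2\prod_i\prod_j$, multiplying the last congruence by $p^2$ lifts the modulus from $p^4$ to $p^6$ and gives exactly the claimed formula.

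The argument is self-contained; the only delicate point is the bookkeeping in the first step — correctly isolating the single factor of $p$ coming from the central term $m=0$, and spotting the telescoping that merges $(p+i)$ with $\prod_{l=1}^{i-1}(p+l)$ so that the surviving factors pair into the clean form $p^2-l^2$. Once that identity for $A_i$ is in hand, the modular expansion is entirely routine.
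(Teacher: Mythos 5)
Your proposal is correct and follows essentially the same route as the paper: the key identity $(p+i)\binom{2i}{i}\binom{p+i-1}{2i}=(-1)^i p\prod_{l=1}^{i}\left(1-\frac{p^2}{l^2}\right)$ is exactly the paper's starting point (there stated without proof), after which both arguments expand the product of the two factors modulo $p^4$ and multiply by $p^2$. If anything, your version is tidier: you actually derive the product identity via the pairing $(p-l)(p+l)=p^2-l^2$, and your elementary-symmetric-function expansion avoids the paper's slightly awkward logarithm/Taylor-series detour.
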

\begin{proof}
Note that 
\begin{align}\label{bc11}
(p+i)\binom{2i}{i}\binom{p+i-1}{2i}=p(-1)^i\prod_{k=1}^{i}\left( 1-\frac{p^2}{k^2}\right) .
\end{align}
It follows that
\begin{align}\label{varformula}
\vartheta_{p,i,j}=p^2(-1)^{i+j}\prod_{k=1}^{i}\prod_{l=1}^{j}\left( 1-\frac{p^2}{k^2}\right)\left( 1-\frac{p^2}{l^2}\right) .
\end{align}
Hence,
\begin{align*}
\vartheta_{p,i,j}\equiv p^2(-1)^{i+j}\prod_{k=1}^{i}\prod_{l=1}^{j}\left( 1-p^2\left( \frac{1}{k^2}+\frac{1}{l^2}\right)\right) \pmod{p^6}.
\end{align*}
Set $$f(x,i,j):=\prod_{k=1}^{i}\prod_{l=1}^{j}\left( 1-x\left( \frac{1}{k^2}+\frac{1}{l^2}\right)\right).$$
We have $f(0,i,j) =1$. Also,
\begin{align*}
\log(f(x,i,j))=\sum_{k=1}^i\sum_{l=1}^j\log\left(1-x\left(\frac{1}{k^2}+\frac{1}{l^2}\right)\right).
\end{align*}
Differentiating both sides of the above equation with respect to $x$ and then setting $x=0$, we obtain
\begin{align*}
f'(0,i,j)=-H_i^{(2)}-H_j^{(2)}.
\end{align*}
Applying Taylor series expansion of order 2, we get
\begin{align*}
f(x,i,j)=1-x\left( H_i^{(2)}+H_j^{(2)}\right) \mathcal{O}(x^2) .
\end{align*}
Finally, setting $ x = p^2 $ yields the desired result.
\end{proof}

\begin{lemma}Let $x,y$ are integers, in which $p\nmid xy$. Then
\begin{align}\label{cg2.9}
\pounds_{1}\left(\frac{x}{y}\right)\equiv -\frac{1}{y} Q_p(x,y)-p\left(\pounds_2\left(1-\frac{x}{y}\right)-\frac{q_p(y)Q_p(x,y)}{y}\right)\pmod{p^2}.
\end{align}
\end{lemma}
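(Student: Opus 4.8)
The plan is to compute $\pounds_1(x/y)$ modulo $p^2$ by combining a direct binomial expansion of $(1-x/y)^p$ with the finite identity \eqref{identity1}. Write $t:=x/y\in\Z_p$, which is legitimate since $p\nmid y$. First I would expand $(1-t)^p=\sum_{k=0}^p\binom{p}{k}(-t)^k$ and use $\binom{p}{k}=\frac{p}{k}\binom{p-1}{k-1}$ together with the refinement $\binom{p-1}{k-1}\equiv(-1)^{k-1}(1-pH_{k-1})\pmod{p^2}$ to obtain
\begin{align*}
(1-t)^p\equiv 1-t^p-p\,\pounds_1(t)+p^2\sum_{k=1}^{p-1}\frac{t^kH_{k-1}}{k}\pmod{p^3},
\end{align*}
and hence, solving for $\pounds_1(t)$,
\begin{align*}
\pounds_1(t)\equiv\frac{1-t^p-(1-t)^p}{p}+p\sum_{k=1}^{p-1}\frac{t^kH_{k-1}}{k}\pmod{p^2}.
\end{align*}

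Next I would match the two terms on the right with the terms of the claimed formula. For the leading term, substituting $t=x/y$ gives $\frac{1-t^p-(1-t)^p}{p}=\frac{y^p-x^p-(y-x)^p}{p\,y^p}=-Q_p(x,y)/y^p$ directly from the definition of $Q_p$; expanding $1/y^p=\frac{1}{y}\bigl(1-pq_p(y)+O(p^2)\bigr)$ via $y^{p-1}=1+pq_p(y)$ then yields $-\frac{1}{y}Q_p(x,y)+\frac{p\,q_p(y)Q_p(x,y)}{y}\pmod{p^2}$, which accounts for the first and third terms of the statement (here $Q_p(x,y)\in\Z_p$, so the error is genuinely $O(p^2)$).

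It remains to handle the correction term, that is, to show
\begin{align*}
\sum_{k=1}^{p-1}\frac{t^kH_{k-1}}{k}\equiv-\pounds_2(1-t)\pmod{p},
\end{align*}
since it enters the formula multiplied by $p$. Here I would invoke \eqref{identity1} with $n=p-1$ and its free variable set equal to $t$: reducing modulo $p$ using $(-1)^k\binom{p-1}{k}\equiv1$ and the Wolstenholme congruence $H_{p-1}^{(2)}\equiv0\pmod p$ gives $\sum_{k=1}^{p-1}\frac1k\sum_{i=1}^k\frac{t^i}{i}\equiv\pounds_2(1-t)\pmod p$. On the other hand, interchanging the order of summation gives the closed form $\sum_{k=1}^{p-1}\frac1k\sum_{i=1}^k\frac{t^i}{i}=H_{p-1}\pounds_1(t)-\sum_{k=1}^{p-1}\frac{t^kH_{k-1}}{k}$, and since $H_{p-1}\equiv0\pmod p$ this double sum is $\equiv-\sum_{k=1}^{p-1}\frac{t^kH_{k-1}}{k}\pmod p$. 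Comparing the two evaluations yields the required congruence, and assembling the three pieces produces exactly \eqref{cg2.9}.

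The main obstacle is precisely this correction term, which is where the nontrivial input \eqref{identity1} is genuinely needed; the argument hinges on recognizing that the double sum $\sum_k\frac1k\sum_{i\le k}\frac{t^i}{i}$ re-sums into $H_{p-1}\pounds_1(t)-\sum_k\frac{t^kH_{k-1}}{k}$, so that the Wolstenholme vanishing $H_{p-1}\equiv0$ eliminates the unwanted $\pounds_1$ contribution modulo $p$. By contrast, the binomial expansion and the $p$-adic manipulation of $1/y^p$ are routine bookkeeping modulo $p^2$.
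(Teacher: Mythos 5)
Your proof is correct, and it takes a genuinely different route from the paper's. The paper disposes of this lemma in two lines by quoting Granville's congruence $\pounds_1(1-a)\equiv -Q_p(a,1)-p\pounds_2(a)\pmod{p^2}$ as a black box, observing that $Q_p\bigl(1-\tfrac{x}{y},1\bigr)=y^{-p}Q_p(x,y)$, and then carrying out the same $y^{-p}\equiv y^{-1}\bigl(1-pq_p(y)\bigr)\pmod{p^2}$ bookkeeping that you perform at the end. What you do instead is reprove Granville's congruence from scratch: first the expansion $(1-t)^p\equiv 1-t^p-p\,\pounds_1(t)+p^2\sum_{k=1}^{p-1}t^kH_{k-1}/k\pmod{p^3}$ via $\binom{p-1}{k-1}\equiv(-1)^{k-1}(1-pH_{k-1})\pmod{p^2}$, and then the identification $\sum_{k=1}^{p-1}t^kH_{k-1}/k\equiv-\pounds_2(1-t)\pmod{p}$, obtained by evaluating the double sum $\sum_{k=1}^{p-1}\frac{1}{k}\sum_{i=1}^{k}t^i/i$ in two ways: once through \eqref{identity1} at $n=p-1$ (using $(-1)^k\binom{p-1}{k}\equiv 1$ and $H_{p-1}^{(2)}\equiv 0\pmod p$), and once by interchanging the order of summation and killing the resulting $H_{p-1}\pounds_1(t)$ term with Wolstenholme. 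I checked the delicate points and they are sound: the error term in expanding $y^{-p}$ really is $O(p^2)$ because $Q_p(x,y)\in\mathbb{Z}_p$, and dividing the mod-$p^3$ congruence by $p$ is legitimate because $1-t^p-(1-t)^p$ is divisible by $p$ in $\mathbb{Z}_p$. What your route buys is self-containedness: the lemma no longer rests on an external citation, and it gives the paper's identity \eqref{identity1} a second application beyond Lemma \ref{lemma11}; the cost is length, whereas the paper's argument is shorter but imports its key ingredient from Granville's work.
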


\begin{proof}Let $x,y$ are integers with $y\neq0$. One can observe that
\begin{align*}
Q_p\left(1-\frac{x}{y},1\right)=\frac{1}{y^p}Q_p(x,y).
\end{align*}
For any $p$-adic integer $a$, Granville \cite[Eq. (6)]{gra2004}, proved that
\begin{align*}
\pounds_1(1-a)\equiv -Q_p(a,1)-p\pounds_2(a) \pmod{p^2}.
\end{align*}
Substituting $ x/y = a $ into the above congruence and using the fact that $ y^{p-1} = 1 + p\,q_p(y) $ for $ p \nmid y $, we obtain \eqref{cg2.9}.
\end{proof}

\begin{lemma}
If $x$ is a p-adic integer, then
\begin{equation}\label{cgh2}
\sum_{k=1}^{(p-1)/2}\frac{x^k}{k^2}\equiv S^{(2)}_{p-1}(x) \pmod{p}.
\end{equation}
\end{lemma}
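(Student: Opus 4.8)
The plan is to treat $\sqrt{x}$ as a formal indeterminate $u$, so that $x=u^2$, and to observe at the outset that after the odd powers of $u$ cancel, the quantity $S^{(2)}_{p-1}(x)$ depends only on $u^2=x$; consequently no genuine $p$-adic square root need exist and the whole computation is an identity in $u^2$. Writing $A:=(1-u)^p+(1+u)^p$, the binomial theorem gives
\[
A=2\sum_{m=0}^{(p-1)/2}\binom{p}{2m}u^{2m},
\]
since every odd-index term cancels (including the top term $j=p$, as $p$ is odd). The $m=0$ contribution is exactly $2$, so using $\tfrac1p\binom{p}{2m}=\tfrac{1}{2m}\binom{p-1}{2m-1}$ I would obtain
\[
\frac{2-A}{p}=-\frac{2}{p}\sum_{m=1}^{(p-1)/2}\binom{p}{2m}u^{2m}=-\sum_{m=1}^{(p-1)/2}\frac{u^{2m}}{m}\binom{p-1}{2m-1}.
\]

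Next I would expand the second sum $B:=\sum_{i=1}^{p-1}\frac{(1-u)^i+(1+u)^i}{i}$ the same way. From $(1-u)^i+(1+u)^i=2\sum_{j\text{ even}}\binom{i}{j}u^{j}$, swapping the order of summation and applying the elementary identity $\tfrac1i\binom{i}{j}=\tfrac1j\binom{i-1}{j-1}$ together with the hockey-stick identity $\sum_{i=j}^{p-1}\binom{i-1}{j-1}=\binom{p-1}{j}$ collapses the inner $i$-sum. The $j=0$ term yields $2H_{p-1}$, while each even $j=2m\ge2$ contributes $\tfrac{u^{2m}}{m}\binom{p-1}{2m}$, so that
\[
B=2H_{p-1}+\sum_{m=1}^{(p-1)/2}\frac{u^{2m}}{m}\binom{p-1}{2m}.
\]

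The two pieces then combine cleanly: subtracting $B$ from $\tfrac{2-A}{p}$ and invoking Pascal's rule $\binom{p-1}{2m-1}+\binom{p-1}{2m}=\binom{p}{2m}$ gives
\[
\frac{2-A}{p}-B=-2H_{p-1}-\sum_{m=1}^{(p-1)/2}\frac{u^{2m}}{m}\binom{p}{2m}.
\]
Multiplying by $\tfrac2p$, the term $-\tfrac{4H_{p-1}}{p}$ vanishes modulo $p$ by Wolstenholme's congruence $H_{p-1}\equiv0\pmod{p^2}$ (valid since $p\ge5$), while each remaining term simplifies via $\tfrac1p\binom{p}{2m}=\tfrac1{2m}\binom{p-1}{2m-1}$ and $\binom{p-1}{2m-1}\equiv-1\pmod p$, producing $\sum_{m=1}^{(p-1)/2}\tfrac{u^{2m}}{m^2}=\sum_{m=1}^{(p-1)/2}\tfrac{x^m}{m^2}$, which is the asserted right-hand side.

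The one genuinely delicate point is the double division by $p$ in the definition of $S^{(2)}_{p-1}$: before reducing modulo $p$ one must check that $\tfrac{2-A}{p}-B$ is itself divisible by $p$, which is exactly what the Wolstenholme cancellation of $2H_{p-1}$ and the divisibility $p\mid\binom{p}{2m}$ for $1\le m\le(p-1)/2$ guarantee. Keeping track of these two orders of vanishing, rather than any single step, is the main bookkeeping obstacle; the remaining manipulations are routine binomial identities.
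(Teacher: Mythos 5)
Your proof is correct, and it takes a genuinely different route from the paper's. The paper first folds the half-range sum into full sums via
\[
\sum_{k=1}^{(p-1)/2}\frac{x^k}{k^2}=2\left(\sum_{k=1}^{p-1}\frac{(-\sqrt{x})^k}{k^2}+\sum_{k=1}^{p-1}\frac{(\sqrt{x})^k}{k^2}\right),
\]
then invokes, as a black box, Z. H. Sun's congruence \cite{hsun2008}
\[
\sum_{k=1}^{p-1}\frac{x^k}{k^2}\equiv \frac{1}{p}\left(\frac{1+(x-1)^p-x^p}{p}-\sum_{i=1}^{p-1}\frac{(1-x)^i}{i}-H_{p-1}\right)\pmod{p}
\]
at the two arguments $\pm\sqrt{x}$, and finishes with Wolstenholme's theorem. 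You never use Sun's lemma or the folding identity: you expand $2-(1-u)^p-(1+u)^p$ and $\sum_{i=1}^{p-1}\bigl((1-u)^i+(1+u)^i\bigr)/i$ directly by the binomial theorem, collapse the inner sum with $\frac{1}{i}\binom{i}{j}=\frac{1}{j}\binom{i-1}{j-1}$ and the hockey-stick identity, merge the two pieces by Pascal's rule, and reduce using $\binom{p-1}{k}\equiv(-1)^k\pmod{p}$ and Wolstenholme --- in effect re-proving the even part of Sun's congruence from scratch. Your version is self-contained and elementary, and it makes explicit two points the paper leaves implicit: that $S^{(2)}_{p-1}(x)$ is a polynomial in $u^2=x$, so no $p$-adic square root of $x$ need exist, and that the quantity inside the outer factor $\frac{2}{p}$ is genuinely divisible by $p$ (since $p\mid\binom{p}{2m}$ for $1\le m\le (p-1)/2$ and $H_{p-1}\equiv 0\pmod{p^2}$), so the congruence really is one between $p$-adic integers. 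What the paper's version buys in exchange is brevity, by outsourcing the key identity to the literature.
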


\begin{proof}
Observe that 
\begin{align}\label{formulah2}
\sum_{k=1}^{(p-1)/2}\frac{x^k}{k^2}=2\left(\sum_{k=1}^{p-1}\frac{\left( -\sqrt{x}\right) ^k}{k^2}+\sum_{k=1}^{p-1}\frac{\left( \sqrt{x}\right) ^k}{k^2}\right).
\end{align}
Sun \cite[Lemma 4.1]{hsun2008} obtains the following congruences: for an odd prime $p$, in $\mathbb{Z}_p\left[ x\right] $ we have
\begin{align}\label{hsuncg}
\sum_{k=1}^{p-1}\frac{x^k}{k^2}\equiv \frac{1}{p}\left( \frac{1+(x-1)^p-x^p}{p}-\sum_{i=1}^{p-1}\frac{(1-x)^i}{i}-H_{p-1}\right) \pmod{p}.
\end{align}
The Wolstenholme theorem states that (see \cite{wols});
\begin{align}\label{wlscg}
H_{p-1}\equiv 0\pmod{p^2}.
\end{align} 
Substituting \eqref{hsuncg} into \eqref{formulah2} and using \eqref{wlscg} yields the desired result.
\end{proof}

\begin{lemma}\label{cgH(2)} If $0\leq k \leq p-1$, then
\begin{equation*}
H_{p-1-k}^{(2)}\equiv-H_{k}^{(2)} \pmod{p}.
\end{equation*}
\end{lemma}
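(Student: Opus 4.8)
The plan is to relate $H_{p-1-k}^{(2)}$ to $H_k^{(2)}$ through the complete sum $H_{p-1}^{(2)}$, exploiting the symmetry $j\mapsto p-j$ of the nonzero residues modulo $p$. First I would split the second-order harmonic sum at the index $p-1-k$, writing
\begin{equation*}
H_{p-1}^{(2)}=\sum_{j=1}^{p-1}\frac{1}{j^2}=\sum_{j=1}^{p-1-k}\frac{1}{j^2}+\sum_{j=p-k}^{p-1}\frac{1}{j^2}=H_{p-1-k}^{(2)}+\sum_{j=p-k}^{p-1}\frac{1}{j^2}.
\end{equation*}
The tail would then be reindexed by the substitution $j=p-i$, which carries $\{p-k,\dots,p-1\}$ bijectively onto $\{1,\dots,k\}$, so that $\sum_{j=p-k}^{p-1} j^{-2}=\sum_{i=1}^{k}(p-i)^{-2}$.

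Next I would reduce this reindexed tail modulo $p$. Since $(p-i)^2\equiv i^2\pmod{p}$ and every $i$ with $1\leq i\leq k\leq p-1$ is a $p$-adic unit, inversion gives $(p-i)^{-2}\equiv i^{-2}\pmod{p}$, whence $\sum_{i=1}^{k}(p-i)^{-2}\equiv\sum_{i=1}^{k}i^{-2}=H_k^{(2)}\pmod{p}$. Combining this with the splitting above yields the intermediate congruence $H_{p-1}^{(2)}\equiv H_{p-1-k}^{(2)}+H_k^{(2)}\pmod{p}$.

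Finally I would invoke the vanishing of the full sum, $H_{p-1}^{(2)}\equiv 0\pmod{p}$, which holds for every prime $p>3$. This is the second-order analogue of Wolstenholme's theorem \eqref{wlscg}; to keep the argument self-contained I would record the short direct justification: the map $j\mapsto j^{-1}$ permutes the nonzero residues, so $H_{p-1}^{(2)}\equiv\sum_{j=1}^{p-1}j^2=\tfrac{(p-1)p(2p-1)}{6}\equiv 0\pmod{p}$ as soon as $p\nmid 6$, i.e. for $p\geq 5$. Substituting $H_{p-1}^{(2)}\equiv 0$ into the intermediate congruence gives $0\equiv H_{p-1-k}^{(2)}+H_k^{(2)}\pmod{p}$, which is exactly the asserted identity. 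Every step here is an elementary reindexing or reduction, so there is no real obstacle; the sole external input is the congruence $H_{p-1}^{(2)}\equiv 0\pmod p$, and the power-sum computation above settles it outright.
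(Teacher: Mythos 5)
Your proof is correct, and its core mechanism is the same as the paper's: both arguments reduce the claim to the single fact $H_{p-1}^{(2)}\equiv 0\pmod p$ via the reindexing $j\mapsto p-j$ together with $(p-i)^2\equiv i^2\pmod p$ (the paper reindexes $H_{p-1-k}^{(2)}$ directly as $\sum_{i=k+1}^{p-1}(p-i)^{-2}$, while you split $H_{p-1}^{(2)}$ and reindex the tail; these are the same computation). The one genuine difference is how the auxiliary fact $H_{p-1}^{(2)}\equiv 0\pmod p$ is obtained. The paper gets it by setting $x=1$ in Sun's congruence \eqref{hsuncg}, which in turn leans on Wolstenholme's theorem \eqref{wlscg}; this is economical within the paper because \eqref{hsuncg} is already needed elsewhere. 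You instead give a self-contained elementary derivation: the inversion $j\mapsto j^{-1}$ permutes the nonzero residues, so $H_{p-1}^{(2)}\equiv\sum_{j=1}^{p-1}j^{2}=\tfrac{(p-1)p(2p-1)}{6}\equiv 0\pmod p$ for $p\geq 5$. Your route is more elementary and independent of the heavier machinery (no appeal to Wolstenholme or to Sun's lemma), at the cost of not reusing results the paper already has on hand; either justification is perfectly adequate here.
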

\begin{proof}
For any $0\leq k \leq p-1$, we have
\begin{align*}
H_{p-1-k}^{(2)}=\sum_{i=k+1}^{p-1}\frac{1}{(p-i)^2}\equiv H_{p-1}^{(2)}-H_{k}^{(2)} \pmod{p}.
\end{align*}
Finally, setting $ x = 1 $ in \eqref{hsuncg} leads to $ H_{p-1}^{(2)} \equiv 0 \pmod{p} $. This completes the proof.
\end{proof}

\begin{lemma}\label{lemma7+} Let $k,i$ be nonnegative integers. \\
If $0\leq i<(p-1)/2$, then
\begin{align}\label{Lemmac13}
\binom{2(p-1)-2i}{p-1-i}\equiv -\frac{p}{\binom{2i}{i}(2i+1)}\pmod{p^2},
\end{align}
\begin{align}\label{Lemmac13+}
\binom{(p-1)-2i}{(p-1)/2-i}\equiv \binom{2i}{i}\frac{(-1)^{(p-1)/2}}{16^i}\pmod{p}.
\end{align}
If $0< k\leq \left( p-1\right) /2$, then
\begin{align}\label{Lemma7c14}
\binom{p-1+2k}{\left( p-1\right) /2+k}\equiv p\frac{4^{2k}\left(-1\right)^{\left( p-1\right) /2}}{2k\binom{2k}{k}}\pmod {p^2}.
\end{align}
\end{lemma}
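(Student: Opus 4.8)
The plan is to treat all three congruences uniformly as statements about a single central binomial coefficient $\binom{2N}{N}$, the three cases differing only in where $N$ lies relative to $p$. Writing $s=(p-1)/2$, the upper entries are $2(p-1-i)$, $2(s-i)$, and $2(s+k)$, so \eqref{Lemmac13} is $\binom{2N}{N}$ with $N=p-1-i$, \eqref{Lemmac13+} is $\binom{2N}{N}$ with $N=s-i$, and \eqref{Lemma7c14} is $\binom{2N}{N}$ with $N=s+k$. The decisive feature is the $p$-adic valuation of $\binom{2N}{N}$: in the ranges relevant to \eqref{Lemmac13} and \eqref{Lemma7c14} one has $p<2N<2p$ while $N<p$, so $(2N)!$ contains exactly one factor of $p$ and $\binom{2N}{N}$ is divisible by $p$ exactly once; in the range for \eqref{Lemmac13+} one has $2N\le p-1$, so $\binom{2N}{N}$ is a $p$-adic unit. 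This is precisely what makes the first and third congruences hold modulo $p^2$ and the second modulo $p$.

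For \eqref{Lemmac13} and \eqref{Lemma7c14} I would write $2N=p+m$ with $1\le m\le p-2$ and factor $(2N)!=p\,(p-1)!\prod_{j=1}^{m}(p+j)$, isolating the unique power of $p$. Because the quotient $\binom{2N}{N}/p$ is then a $p$-adic unit, it suffices to evaluate it modulo $p$, which automatically upgrades the congruence to modulo $p^2$. Modulo $p$ the ingredients are Wilson's theorem $(p-1)!\equiv-1$, the elementary reduction $\prod_{j=1}^{m}(p+j)\equiv m!$, and the reflection formula $(p-1-j)!\equiv(-1)^{j+1}/j!\pmod p$, which converts every factorial into an expression in $i!$ or $k!$. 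For \eqref{Lemmac13} this immediately yields $-p/((2i+1)\binom{2i}{i})$ once one recognizes $(2i+1)!=(2i+1)\binom{2i}{i}(i!)^2$. For \eqref{Lemma7c14} the entry $N=s+k$ forces a factor $((\tfrac{p-1}{2})!)^2$, which I would evaluate by the classical identity $((\tfrac{p-1}{2})!)^2\equiv(-1)^{(p+1)/2}\pmod p$ (itself a Wilson consequence) together with $\prod_{j=1}^{k}(\tfrac{p-1}{2}+j)\equiv(2k)!/(4^k k!)\pmod p$; collecting terms and noting $4^{2k}=16^k$ reproduces the stated right-hand side.

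For \eqref{Lemmac13+} I would avoid factorials and instead start from the standard congruence $\binom{2t}{t}\equiv(-1)^t4^t\binom{(p-1)/2}{t}\pmod p$, taken at $t=s-i$. The symmetry $\binom{(p-1)/2}{s-i}=\binom{(p-1)/2}{i}$ converts this back through the same congruence at $i$; since $t+i=s$ and $t-i=s-2i$, the signs collapse to $(-1)^{(p-1)/2}$ while $4^{t-i}=4^{s}/16^{i}\equiv 1/16^i\pmod p$ by Fermat's little theorem ($4^{(p-1)/2}=2^{p-1}\equiv1$). This produces exactly $\binom{2i}{i}(-1)^{(p-1)/2}/16^i$.

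The substance here is bookkeeping rather than any deep idea. The two points that need care are the valuation count, confirming that exactly one factor of $p$ divides the numerator in the two modulo-$p^2$ cases, which is what licenses reducing the cofactor modulo $p$, and the sign tracking, especially the half-factorial evaluation $((\tfrac{p-1}{2})!)^2\equiv(-1)^{(p+1)/2}$ and the repeated use of the reflection formula. I expect \eqref{Lemma7c14} to be the most error-prone spot for signs, so I would sanity-check the final formula against a small prime such as $p=5$ or $p=7$.
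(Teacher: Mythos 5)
Your proof is correct, but it takes a genuinely different route from the paper's. The paper proves \eqref{Lemmac13} and \eqref{Lemmac13+} in one stroke from the algebraic identity \eqref{cbidentity}, namely $\binom{2m-2i}{m-i}=\frac{\binom{2m}{m}\binom{m}{i}^2}{\binom{2i}{i}\binom{2m}{2i}}$, specialized at $m=p-1$ and $m=(p-1)/2$, combined with the congruences $\binom{p-1}{k}\equiv(-1)^k\pmod p$, $\binom{(p-1)/2}{i}\equiv\binom{2i}{i}(-4)^{-i}\pmod p$ and $\binom{2(p-1)}{p-1}\equiv-p\pmod{p^2}$; for \eqref{Lemma7c14} it factors $\binom{p-1+2k}{(p-1)/2+k}$ as $p\binom{p-1}{(p-1)/2}$ times a ratio of products, reduces that ratio modulo $p$, and quotes Morley's congruence \eqref{morleycg}. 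You instead make the valuation count explicit (exactly one factor of $p$ in the two mod-$p^2$ cases, none in the mod-$p$ case, which is indeed what licenses evaluating the unit cofactor modulo $p$) and then run uniform factorial bookkeeping with Wilson's theorem, the reflection formula $(p-1-j)!\equiv(-1)^{j+1}/j!\pmod p$, and the half-factorial evaluation $\left(\left(\frac{p-1}{2}\right)!\right)^2\equiv(-1)^{(p+1)/2}\pmod p$; all of these steps check out, including the sign-sensitive collection in \eqref{Lemma7c14}, where the factors combine to $(-1)^{(p-1)/2}16^k/\left(2k\binom{2k}{k}\right)$ as claimed. For \eqref{Lemmac13+} your argument and the paper's rest on the same congruence $\binom{2t}{t}\equiv(-4)^t\binom{(p-1)/2}{t}\pmod p$, which you apply twice together with the symmetry $\binom{(p-1)/2}{(p-1)/2-i}=\binom{(p-1)/2}{i}$ and $4^{(p-1)/2}\equiv1\pmod p$, while the paper routes it through \eqref{cbidentity}. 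What each approach buys: yours is more self-contained, since the only external input is Wilson's theorem --- in particular you avoid Morley's congruence, of which the paper really uses only the mod-$p$ consequence $\binom{p-1}{(p-1)/2}\equiv(-1)^{(p-1)/2}$, precisely the fact your half-factorial identity encodes; the paper's version is shorter on the page, because identity \eqref{cbidentity} and the three quoted congruences dispatch \eqref{Lemmac13} and \eqref{Lemmac13+} with almost no computation, and those congruences are reused elsewhere in the paper.
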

\begin{proof}For $m$ be a positive integer and $i$ be a nonnegative integer, one can see that
\begin{align}\label{cbidentity}
\binom{2m-2i}{m-i}=\frac{\binom{2m}{m}\binom{m}{i}^2}{\binom{2i}{i}\binom{2m}{2i}}.
\end{align}
For $ i \in \{0, \ldots, (p-1)/2\} $ and $ k \in \{0, \ldots, p-1\} $, it is routine to verify that
\begin{align*}
\binom{(p-1)/2}{i} \equiv \binom{2i}{i}\frac{1}{(-4)^i}\pmod p \text{ and }\binom{p-1}{k}\equiv (-1)^k\pmod{p}.
\end{align*}
Also, we have
\begin{align*}
\binom{2(p-1)}{p-1}=\frac{(2p-2)(2p-3)\cdots (p+1)p}{(p-1)!}\equiv -p\pmod{p^2}.
\end{align*}
Now, \eqref{Lemmac13} and \eqref{Lemmac13+} follow by substituting $m=p-1$ (respectively $m=(p-1)/2$) into \eqref{cbidentity} and using the above congruences.

On other hand side, we have
\begin{align*}
\binom{p-1+2k}{\left( p-1\right) /2+k}&=\binom{p-1}{\left( p-1\right) /2}\frac{\prod_{i=1}^{2k}\left( p-1+i\right) }{\prod_{i=1}^{k}\left( \left( p-1\right) /2+i\right)^{2} }\\
&=p\binom{p-1}{\left( p-1\right) /2}\frac{2^{2k}\prod_{i=1}^{2k-1}\left( p+i\right) }{\prod_{i=1}^{k}\left(p+2i-1\right)^{2} }.
\end{align*}
We readily obtain
\begin{align*}
\frac{\prod_{i=1}^{2k-1}\left( p+i\right) }{\prod_{i=1}^{k}\left(p+2i-1\right)^{2} }\equiv\frac{2^{2k}}{2k\binom{2k}{k}}\pmod p.
\end{align*}
Finally, combining Morley’s congruence \cite{morley}
\begin{align}\label{morleycg}
\binom{p-1}{(p-1)/2} \equiv (-1)^{(p-1)/2} 4^{p-1} \pmod{p^3},
\end{align}
with Fermat’s Little Theorem, we obtain \eqref{Lemma7c14}.
\end{proof}

\begin{lemma}Let $0<k\leq(p-1)/2$ be an integer. Then
\begin{equation}\label{lemma71}
\sum_{i=k}^{(p-1)/2} \frac{\binom{2(i-k)}{i-k}}{\binom{2i}{i}i}\equiv -\frac{H_{(p-1)/2}}{4^{k}}-\frac{1}{4^k}\sum_{i=1}^{k-1}\binom{2i}{i}\frac{1}{i4^{i}}\pmod{p},
\end{equation}
\begin{equation}\label{lemma72}
\sum_{i=0}^{(p-3)/2}\binom{2(k+i)}{k+i}\frac{1}{(2i+1)\binom{2i}{i}}\equiv\frac{4^k}{2}H_{(p-1)/2}+\frac{1}{2}\sum_{i=1}^{k-1}\binom{2i}{i}\frac{4^{k-i}}{i}\pmod{p}.
\end{equation}

\end{lemma}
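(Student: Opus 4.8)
The plan is to handle both congruences by a single scheme: collapse each summand to an explicit rational function of the summation index $i$, resolve it by partial fractions, and then reduce the resulting harmonic tails modulo $p$. For \eqref{lemma71} I would first clear factorials. Separating the even and odd factors of $(2i)(2i-1)\cdots(2i-2k+1)$ and cancelling the square $\prod_{m=0}^{k-1}(i-m)$ that appears in the numerator gives $\binom{2(i-k)}{i-k}/\binom{2i}{i}=\prod_{m=0}^{k-1}(i-m)/(2^k\prod_{m=0}^{k-1}(2i-2m-1))$; dividing by $i$ removes the $m=0$ numerator factor, so the summand becomes the degree $-1$ rational function
\[
\frac{\binom{2(i-k)}{i-k}}{\binom{2i}{i}\,i}=\frac{\prod_{m=1}^{k-1}(i-m)}{2^k\prod_{m=0}^{k-1}(2i-2m-1)},
\]
which has simple poles at $i=(2m+1)/2$ for $0\le m\le k-1$.

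Next I would expand this as $\sum_{m=0}^{k-1}A_m/(2i-2m-1)$. A residue computation, pinned down by the leading $1/(4^k i)$ behaviour as $i\to\infty$ together with the convolution identity $\sum_{m=0}^{N}\binom{2m}{m}\binom{2(N-m)}{N-m}=4^N$, yields $A_m=2^{3-4k}\binom{2m}{m}\binom{2(k-1-m)}{k-1-m}$. Summing over $k\le i\le (p-1)/2$ and shifting $j=i-m$ converts each inner sum into $O_{(p-1)/2-m}-O_{k-m-1}$, where $O_t:=\sum_{j=1}^{t}1/(2j-1)$.

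The reduction modulo $p$ is then short. From $O_t=H_{2t}-\tfrac12 H_t$ and Wolstenholme's theorem \eqref{wlscg} one gets $O_{(p-1)/2}\equiv-\tfrac12 H_{(p-1)/2}$, and running over the top $m$ terms (where $2j-1\equiv-2(s+1)$ for $j=(p-1)/2-s$) gives $O_{(p-1)/2-m}\equiv-\tfrac12 H_{(p-1)/2}+\tfrac12 H_m\pmod p$. Substituting this and applying the convolution identity once more collapses the coefficient of $H_{(p-1)/2}$ to exactly $-1/4^k$, reproducing the leading term of \eqref{lemma71}.

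The genuinely hard step is the $k$-dependent remainder: I must show
\[
2^{3-4k}\sum_{m=0}^{k-1}\binom{2m}{m}\binom{2(k-1-m)}{k-1-m}\Big(\tfrac12 H_m-O_{k-m-1}\Big)=-\frac{1}{4^k}\sum_{i=1}^{k-1}\frac{\binom{2i}{i}}{i\,4^i}.
\]
This is an exact rational identity in $k$ (I have verified it for $k\le 3$), but it couples central binomials to harmonic numbers, so I expect to establish it by the creative-telescoping route already used for \eqref{identity1}--\eqref{catalan-new}: exhibit a common linear recurrence in $k$ with \texttt{Sigma} and match initial values. Congruence \eqref{lemma72} then follows from the identical template — the quotient $\binom{2(k+i)}{k+i}/((2i+1)\binom{2i}{i})$ simplifies (after cancelling the factor $2i+1$) to $2^k\prod_{l=1}^{k-1}(2i+2l+1)/\prod_{l=1}^{k}(i+l)$, whose partial fractions over the poles $i=-l$ produce ordinary harmonic tails, the coefficient of $H_{(p-1)/2}$ collapsing to $4^k/2$, with the surviving $k$-indexed identity again certified by \texttt{Sigma}.
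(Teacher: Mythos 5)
Your proposal is correct in substance, but it takes a genuinely different route from the paper's. The paper keeps the sum $z_k=\sum_{i=k}^{n}\binom{2(i-k)}{i-k}\big/\bigl(i\binom{2i}{i}\bigr)$ intact with general upper limit $n$, lets Zeilberger's algorithm produce the first-order recurrence $z_k-4z_{k+1}=\binom{2n-2k}{n-k}\big/\bigl(k\binom{2n}{n}\bigr)$, then specializes $n=(p-1)/2$, reduces the right-hand side modulo $p$ using \eqref{Lemmac13+}, Morley's congruence \eqref{morleycg} and Fermat's little theorem to get $z_k-4z_{k+1}\equiv\binom{2k}{k}/(k16^k)\pmod p$, and telescopes from $k=1$; the companion congruence \eqref{lemma72} is treated the same way. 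You instead make the summand completely explicit: your factorization of the quotient, the partial-fraction residues $A_m=2^{3-4k}\binom{2m}{m}\binom{2(k-1-m)}{k-1-m}$, the Wolstenholme-based reduction $O_{(p-1)/2-m}\equiv-\tfrac12H_{(p-1)/2}+\tfrac12H_m\pmod p$, and the collapse of the coefficient of $H_{(p-1)/2}$ to $-1/4^k$ (resp.\ $4^k/2$) via the convolution identity \eqref{identity2} are all correct. What your route buys is transparency: the only arithmetic input is Wolstenholme \eqref{wlscg}, with no appeal to \eqref{Lemmac13+} or Morley, and all congruence reductions are elementary. What it costs is that the entire difficulty is concentrated in the exact ``remainder'' identity, which you verify only for $k\le3$.

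That remainder identity is true, and your plan to certify it by creative telescoping is realistic, but note that it is precisely the exact (over $\mathbb{Q}$) counterpart of the recurrence the paper extracts by Zeilberger: both sides of your identity satisfy $X_k-4X_{k+1}=\binom{2k}{k}/(k16^k)$ with $X_1=0$. For your right-hand side this is a one-line check, so the genuine content is that your binomial-harmonic convolution on the left satisfies this recurrence --- exactly the kind of statement Sigma (or a generating-function computation using $\sum_m\binom{2m}{m}x^m=(1-4x)^{-1/2}$ together with the closed form of $\sum_m\binom{2m}{m}H_mx^m$) can certify. So your argument does not remove the computer-algebra dependence, it only relocates it from the front of the proof to the end; until that certificate is actually produced, this single step is the (clearly closable) gap separating your proposal from a complete proof.
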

\begin{proof}We prove only \eqref{lemma71}, since \eqref{lemma72} can be established by a similar argument. Let $n,k$ are positive integers, with $0<k\leq n$. Define
\begin{align*}
z_k=z(k,n):=\sum_{i=k}^{n} \frac{\binom{2(i-k)}{i-k}}{\binom{2i}{i}i}.
\end{align*}
Applying Zeilberger’s algorithm, using the \texttt{Fast Zeilberger} package \cite{paule} implemented in \texttt{Mathematica}, we find that the sequence $ (z_k)_{k}$ satisfies the following recurrence relation:
\begin{align*}
z_k-4z_{k+1}=\frac{\binom{2n-2k}{n-k}}{k\binom{2n}{n}}.
\end{align*}
Set $n=(p-1)/2$. Using \eqref{Lemmac13+}, Morley's congruence \eqref{morleycg} and Fermat's Little Theorem, we obtain
\begin{align*}
z_k-4z_{k+1}\equiv \binom{2k}{k}\frac{1}{k16^k}\pmod{p}.
\end{align*}
Finally, solving the above recurrence relation modulo $p$, for $k\in \left\lbrace 1,2,\ldots,(p-1)/2\right\rbrace $, we arrive at \eqref{lemma71}.
\end{proof}

\begin{lemma}\label{lemma11} Let $b,c$ be integers. Set $d:=b^2-4c$ and suppose that $p\nmid d$. Then
\begin{align*}
\sum_{k=1}^{(p-1)/2}\frac{(-4c)^k}{d^kk}\sum_{i=1}^{k-1}\frac{\binom{2i}{i}}{4^{i}i}\equiv-H_{(p-1)/2}\sum_{k=1}^{(p-1)/2}\frac{(-4c)^k}{d^kk}-\sum_{k=1}^{(p-1)/2}\frac{(d+4c)^k}{d^kk^2}\pmod{p}.
\end{align*}
\end{lemma}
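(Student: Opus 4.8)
The plan is to reduce everything to a single symmetry between two complementary triangular double sums, feeding on the exact identity \eqref{identity1}. Introduce the $p$-adic number $x:=-4c/d$ (legitimate since $p\nmid d$); because $b^2=d+4c$ we have $1-x=(d+4c)/d=b^2/d$, so the two sums on the right-hand side are $\sum_{k=1}^{(p-1)/2}x^k/k$ and $\sum_{k=1}^{(p-1)/2}(1-x)^k/k^2$. Writing $w_i:=\binom{2i}{i}/(4^i i)$ and $u_k:=x^k/k$, the asserted congruence becomes
\[
\sum_{1\le i<k\le(p-1)/2}w_i u_k\;\equiv\;-H_{(p-1)/2}\sum_{k=1}^{(p-1)/2}u_k-\sum_{k=1}^{(p-1)/2}\frac{(1-x)^k}{k^2}\pmod{p}.
\]
I denote the left-hand side by $S_1$ and set $S_2:=\sum_{k=1}^{(p-1)/2}w_k\sum_{i=1}^{k}u_i=\sum_{1\le i\le k\le(p-1)/2}u_i w_k$.

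Next I extract the needed data from \eqref{identity1}. Taking $n=(p-1)/2$ there and invoking the standard congruence $\binom{(p-1)/2}{k}\equiv\binom{2k}{k}(-4)^{-k}\pmod p$ (already used in the proof of Lemma \ref{lemma7+}), so that $\binom{(p-1)/2}{k}(-1)^k/k\equiv w_k$, together with $H_{(p-1)/2}^{(2)}\equiv 0\pmod p$ (which follows from Lemma \ref{cgH(2)} at $k=(p-1)/2$, since then $p-1-k=(p-1)/2$), the identity \eqref{identity1} reduces coefficientwise in $x$ to the polynomial congruence
\[
S_2=\sum_{k=1}^{(p-1)/2}w_k\sum_{i=1}^{k}\frac{x^i}{i}\;\equiv\;\sum_{k=1}^{(p-1)/2}\frac{(1-x)^k}{k^2}\pmod{p}.
\]
Since this is a congruence of polynomials in $x$, I may both specialize $x=-4c/d$ and compare coefficients: reading off the coefficient of $x^1$ on either side yields the auxiliary congruence $\sum_{i=1}^{(p-1)/2}w_i\equiv-H_{(p-1)/2}\pmod p$.

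It remains to combine these via the exact rational identity $S_1+S_2=\big(\sum_{i=1}^{(p-1)/2}w_i\big)\big(\sum_{k=1}^{(p-1)/2}u_k\big)$: the strictly upper triangle $\{i<k\}$ contributing $w_iu_k$ and the lower triangle with diagonal $\{i\le k\}$ contributing $u_iw_k$ together exhaust the full square $\{(i,k)\}$ once the second is reindexed by swapping $i\leftrightarrow k$ (the diagonal being counted exactly once). Hence
\[
S_1=\Big(\sum_{i=1}^{(p-1)/2}w_i\Big)\Big(\sum_{k=1}^{(p-1)/2}u_k\Big)-S_2\equiv-H_{(p-1)/2}\sum_{k=1}^{(p-1)/2}u_k-\sum_{k=1}^{(p-1)/2}\frac{(1-x)^k}{k^2}\pmod{p},
\]
which is exactly the claim. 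The only genuinely delicate point is justifying that the reduction of \eqref{identity1} modulo $p$ is a \emph{coefficientwise} congruence of polynomials in $x$, so that the single relation legitimately supplies both the evaluation at $x=-4c/d$ and the $x^1$-coefficient congruence $\sum_i w_i\equiv-H_{(p-1)/2}$; the remaining bookkeeping in the symmetry step is routine.
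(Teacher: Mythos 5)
Your proof is correct and follows essentially the same route as the paper: both rest on the same triangle-versus-square symmetry decomposition of the double sum and on reducing identity \eqref{identity1} modulo $p$ (with $n=(p-1)/2$, $x=-4c/d$) via the congruence $\binom{2k}{k}\equiv\binom{(p-1)/2}{k}(-4)^k\pmod{p}$. The only (harmless) cosmetic differences are that you obtain $\sum_{i}\binom{2i}{i}/(4^i i)\equiv -H_{(p-1)/2}\pmod p$ by extracting the $x^1$-coefficient of \eqref{identity1}, where the paper cites Gould's identity $\sum_{i=1}^{n}\binom{n}{i}(-1)^i/i=-H_n$, and that you deduce $H_{(p-1)/2}^{(2)}\equiv 0\pmod p$ from Lemma \ref{cgH(2)} rather than from \eqref{cgh2} at $x=1$.
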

\begin{proof}
It is easy to show that 
\begin{align*}
\sum_{k=1}^{(p-1)/2}\frac{(-4c)^k}{d^kk}\sum_{i=1}^{k-1}\frac{\binom{2i}{i}}{4^{i}i}=\sum_{i=1}^{(p-1)/2}\frac{\binom{2i}{i}}{4^{i}i}\sum_{k=1}^{(p-1)/2}\frac{(-4c)^k}{d^kk}-\sum_{i=1}^{(p-1)/2}\frac{\binom{2i}{i}}{4^{i}i}\sum_{k=1}^{i}\frac{(-4c)^k}{d^kk}.
\end{align*}
We remember that $\binom{2k}{k}\equiv \binom{(p-1)/2}{k}(-4)^k\pmod{p}$ for $k\in \left\lbrace 0,1,\ldots,(p-1)/2\right\rbrace $. Hence, using \eqref{identity1} and the following binomial identity (see Gould \cite[(1.45)]{gould}):
\begin{align*}
\sum_{i=1}^{n}\binom{n}{i}\frac{(-1)^i}{i}=-H_n,
\end{align*}
we get
\begin{align*}
\sum_{k=1}^{(p-1)/2}\frac{(-4c)^k}{d^kk}\sum_{i=1}^{k-1}\frac{\binom{2i}{i}}{4^{i}i}\equiv-H_{(p-1)/2}\sum_{k=1}^{(p-1)/2}\frac{(-4c)^k}{d^kk}+H_{(p-1)/2}^{(2)}-\sum_{k=1}^{(p-1)/2}\frac{(d+4c)^k}{d^kk^2}\pmod{p}.
\end{align*}
Substituting $ x = 1 $ into \eqref{cgh2} yields $ H_{(p-1)/2}^{(2)} \equiv 0 \pmod{p} $. This completes the proof.
\end{proof}

\begin{lemma} Let $n,i,j$ be nonnegative integers with $n>0$. Define
\begin{align*}
z(n,i,j,a):=\binom{2i}{i}\binom{2j}{j}\sum_{k=\max(i,j)}^{n-1}\binom{k+i}{2i}\binom{k+j}{2j}(2k+1)^{a}(-1)^k.
\end{align*}
If $0\leq j<p-1$ and $0\leq i\leq p-1$, then
\begin{align}\label{theorem3cg1}
z(p,i,j,1)\equiv p(-1)^{i+j}\binom{i+j}{j}\pmod{p^3},
\end{align}
\begin{align}\label{theorem3cg1+}
z(p,i,j,3)\equiv p(-1)^{i+j}\binom{i+j}{j}\left( 3+4i+4j+4ij\right) \pmod{p^3}.
\end{align}
\end{lemma}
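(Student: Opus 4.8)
The plan is to determine the inner sum as a function of its upper limit $n$ and then set $n=p$. Put $g(k):=(-1)^{k}(2k+1)^{a}\binom{k+i}{2i}\binom{k+j}{2j}$, so that $z(n,i,j,a)=\binom{2i}{i}\binom{2j}{j}\sum_{k=0}^{n-1}g(k)$; one may start the summation at $k=0$ since $\binom{k+i}{2i}=0$ for $k<i$. The structural point is a reflection symmetry about $k=-\tfrac12$: for odd $a$ the factor $(2k+1)^{a}$ changes sign under $k\mapsto -1-k$, whereas $\binom{k+i}{2i}$ is a product of $2i$ integers symmetric about $k+\tfrac12$ and hence is invariant, so $g(-1-k)=g(k)$. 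Writing $S(n):=\sum_{k=0}^{n-1}g(k)$, the recurrence $S(n+1)-S(n)=(-1)^{n}f(n)$ with $f(n):=(2n+1)^{a}\binom{n+i}{2i}\binom{n+j}{2j}$ shows that $S(n)=(-1)^{n-1}R(n)$, where $R$ is the unique polynomial solution of $R(n+1)+R(n)=f(n)$; the symmetry $f(-1-n)=-f(n)$ makes $\widehat R(n):=-R(-n)$ another polynomial solution, so $\widehat R=R$ and $R$ is odd. I would establish this closed form either from this uniqueness argument or, in the style of \eqref{1.1} and \eqref{1.2}, by producing $R$ with SigmaReduce and confirming it by induction on $n$.

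Specialization to $n=p$ is then transparent: since $p$ is odd, $(-1)^{p-1}=1$, so $z(p,i,j,a)=\binom{2i}{i}\binom{2j}{j}R(p)$, and because $R$ is odd its expansion $R(p)=r_{1}p+r_{3}p^{3}+\cdots$ carries no $p^{2}$ term. Hence
\begin{equation*}
z(p,i,j,a)\equiv \binom{2i}{i}\binom{2j}{j}\,r_{1}\,p \pmod{p^{3}},
\end{equation*}
and everything reduces to identifying the linear coefficient $r_{1}=R'(0)$ and checking that $\binom{2i}{i}\binom{2j}{j}r_{1}\,p$ equals the right-hand sides of \eqref{theorem3cg1} and \eqref{theorem3cg1+}. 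Rewriting the summand via $\binom{2i}{i}\binom{k+i}{2i}=\binom{k+i}{i}\binom{k}{i}$, the emergence of $\binom{i+j}{j}$ signals that $r_{1}$ is governed by a Chu--Vandermonde evaluation; concretely I would compute $R'(0)$ through the Boole (Euler--Genocchi) summation $R=\tfrac12\sum_{r\ge1}G_{r}\,[n^{r}]f(n)$, or simply read it off the explicit SigmaReduce output, cross-checking the constants -- and in particular their signs -- against the cases $(i,j)\in\{(0,0),(1,0),(1,1)\}$. The $a=3$ computation is the $a=1$ one with $(2k+1)^{3}$ in place of $(2k+1)$, and the extra factor $3+4i+4j+4ij$ should surface from the additional derivative weights.

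I expect the main obstacle to be the exact simplification of $r_{1}$ to these compact binomial forms, since the arithmetic with the Genocchi/derivative weights is where sign and constant errors creep in. A secondary but genuine issue is $p$-integrality: because $0\le i\le p-1$ and $0\le j<p-1$, the degree $2i+2j+a$ of $R$ may exceed $p$, so reading $r_{1}$ off the power-series coefficients is not automatically legitimate (a denominator $p$ could enter $r_{3}$). To bypass this I would run the reflection \emph{inside} the sum, replacing $k$ by $p-1-k$ and expanding $(2p-1-2k)^{a}$ together with $\binom{p-1-k+i}{2i}$ and $\binom{p-1-k+j}{2j}$ in powers of $p$ (via Lemma \ref{lemma7+}, Lemma \ref{lemma2} and Wolstenholme's congruence \eqref{wlscg}; the strict inequality $j<p-1$ guarantees the numerators do not straddle two multiples of $p$). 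This produces a self-referential congruence
\begin{equation*}
z(p,i,j,a)\equiv -\,z(p,i,j,a)-p\,\Sigma_{1}-p^{2}\,\Sigma_{2}\pmod{p^{3}},
\end{equation*}
so that $z(p,i,j,a)\equiv -\tfrac{p}{2}\Sigma_{1}-\tfrac{p^{2}}{2}\Sigma_{2}$; this forces the $O(p)$ shape automatically and leaves only the harmonic-weighted residual sums $\Sigma_{1}\pmod{p^{2}}$ and $\Sigma_{2}\pmod{p}$, which I would evaluate with the finite-polylogarithm congruences \eqref{cgh2} and \eqref{cg2.9} and then match to \eqref{theorem3cg1} and \eqref{theorem3cg1+}. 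That last evaluation is the computational heart, and I would manage its bookkeeping with Sigma.
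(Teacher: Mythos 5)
Your structural observations are sound: the reflection $k\mapsto -1-k$ does fix $g$, the inner sum equals $(-1)^{n-1}R(n)$ with $R$ the odd polynomial solving $R(n+1)+R(n)=f(n)$, and this correctly explains why no $p^2$ term can appear. Your $p$-integrality worry is also legitimate, though it has a one-line fix you missed: absorb the prefactors into the polynomial via $\binom{2i}{i}\binom{k+i}{2i}=\binom{k+i}{i}\binom{k}{i}$, whose coefficients have denominators dividing $2^{m}(i!)^2(j!)^2$ and are therefore $p$-integral for $i,j\le p-1$; then $z(p,i,j,a)=\tilde R(p)\equiv \tilde r_1\, p\pmod{p^3}$ is immediate, with no need for your reflection-inside-the-sum detour. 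The genuine gap lies elsewhere: the entire content of \eqref{theorem3cg1} and \eqref{theorem3cg1+} is the identification of the linear coefficient with $(-1)^{i+j}\binom{i+j}{j}$, respectively $(-1)^{i+j}\binom{i+j}{j}(3+4i+4j+4ij)$, and you never produce it. Your first route (Genocchi weights, or ``read it off the SigmaReduce output,'' cross-checked at $(i,j)\in\{(0,0),(1,0),(1,1)\}$) is not a proof: $R$ has degree $2i+2j+a$ with $i,j$ symbolic, so there is no fixed-degree closed form for Sigma to return, and verifying three special cases establishes nothing for general $i,j$. Your fallback route fares no better: after reflecting $k\mapsto p-1-k$, the residual sums $\Sigma_1$ (needed mod $p^2$) and $\Sigma_2$ (needed mod $p$) are again alternating double-binomial sums of the form $\sum_k\binom{k+i}{2i}\binom{k+j}{2j}(2k+1)^{a-1}(-1)^k$ decorated with harmonic-number weights --- the same species as $z$ itself --- whereas \eqref{cgh2} and \eqref{cg2.9} are congruences for power sums $\sum_k x^k/k^m$ and do not apply to them. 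So the ``computational heart'' you defer \emph{is} the lemma.

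For comparison, the paper closes exactly this gap by running creative telescoping in the parameter $j$ rather than working in $k$: Zeilberger's algorithm gives a first-order recurrence relating $(1+i+j)z_j$ and $(1+j)z_{j+1}$ whose inhomogeneous side, at $n=p$, is $\pm p\,\vartheta_{p,i,j}/(1+j)\equiv 0\pmod{p^3}$ by Lemma \ref{lemma2} (the hypothesis $j<p-1$ keeps $1+j$ a $p$-adic unit). Iterating the resulting homogeneous congruence produces the factor $(-1)^j\binom{i+j}{j}$ automatically --- it comes from the product $\prod_{l=1}^{j}\frac{i+l}{l}$, not from any Chu--Vandermonde evaluation --- and the base case $z_0$ is a single-binomial sum with a known closed form (Mao--Liu), reduced mod $p^3$ via \eqref{bc11}. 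That is the step your proposal is missing: even in your cleanest formulation, you would still need an argument of this kind to pin down $\tilde r_1$.
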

\begin{proof}
Let $ n, i, j $ be nonnegative integers with $ n > 0 $, and set $ z_j = z(n,i,j) $. Applying Zeilberger’s algorithm, we find that the sequence $ (z_j)_{j } $ satisfies the following recurrence relation:
\begin{align*}
-(1 + i + j)z_j-(1 +j)z_{1 + j} =\binom{2i}{i}\binom{2j}{j}\binom{n-1+i}{2i}\binom{n-1+j}{2j}\frac{(-1)^nn (i + n) (j + n)}{1+j}.
\end{align*}
Replacing $n$ by $p$ into the above equation, and using Lemma \ref{lemma2} modulo $p^3$, for $0\leq j<p-1$ and $0\leq i\leq p-1$, we obtain
\begin{align*}
(1+i+j)z_j+(1+j)z_{1+j} \equiv 0\pmod{p^3}.
\end{align*}
Which implies that 
\begin{align*}
z_j\equiv (-1)^j\binom{i+j}{j}z_0\pmod{p^3}.
\end{align*}
Note that
\begin{align*}
z_0=\binom{2i}{i}\sum_{k=i}^{p-1}\binom{k+i}{2i}(2k+1)(-1)^k.
\end{align*}
For $n$ be a positive integer and $0\leq j\leq n-1 $ be an integer, we have (see \cite[(3.1)]{mao2023}),
\begin{align*}
\sum_{k=i}^{n-1}\binom{k+i}{2i}(2k+1)(-1)^k=(i+n)\binom{n-1+i}{2i}.
\end{align*}
Substituting $ p = n $ into the above binomial identity and reducing modulo $ p^3 $ via \eqref{bc11}, we obtain \eqref{theorem3cg1}.  

Similarly, applying the same procedure and using the binomial identity \cite[(3.2)]{mao2023}, we deduce \eqref{theorem3cg1+}.
\end{proof}

\medskip

\section{Proofs of theorems}
\setcounter{lemma}{0} \setcounter{theorem}{0}
\setcounter{equation}{0}\setcounter{proposition}{0}
\setcounter{remark}{0}\setcounter{conjecture}{0}

\label{section3}
\begin{proof}[Proof of Theorem \ref{theorem1}] Applying identity \eqref{sunidentity}, we obtain
\begin{align*}
\sum_{k=0}^{p-1}\left( 2k+1\right) \frac{T_k\left( b,c\right) ^4}{d^{2k}}&=\sum_{k=0}^{p-1}\frac{\left( 2k+1\right) }{d^{2k}}\sum_{i=0}^{k} \sum_{j=0}^{k} \binom{2i}{i}^2\binom{2j}{j}^2\binom{k+i}{2i}\binom{k+j}{2j}d^{2k-i-j}c^{j+i}\\
&=\sum_{i=0}^{p-1} \sum_{j=0}^{p-1} \binom{2i}{i}^2\binom{2j}{j}^2\frac{c^{j+i}}{d^{i+j}}\sum_{k=\max\left( i,j\right) }^{p-1}\left( 2k+1\right)\binom{k+i}{2i}\binom{k+j}{2j}.
\end{align*}
Using identity \eqref{1.1} with $n=p$, we get
\begin{align*}
\sum_{k=0}^{p-1}\left( 2k+1\right) \frac{T_k\left( b,c\right) ^4}{d^{2k}}=\sum_{i=0}^{p-1} \sum_{j=0}^{p-1} \binom{2i}{i}\binom{2j}{j}\frac{\vartheta_{p,i,j} }{1+i+j}\frac{c^{j+i}}{d^{i+j}}.
\end{align*}
Set
\begin{align*}
\Sigma_1:=\sum_{i=0}^{p-2} \sum_{j=0}^{p-2-i} \binom{2i}{i}\binom{2j}{j}\frac{\vartheta_{p,i,j} }{1+i+j}\frac{c^{j+i}}{d^{i+j}},\\
\Sigma_2:=\sum_{i=1}^{p-1} \sum_{j=p-i}^{p-1} \binom{2i}{i}\binom{2j}{j}\frac{\vartheta_{p,i,j} }{1+i+j}\frac{c^{j+i}}{d^{i+j}}
\end{align*}
and
\begin{align*}
\Sigma_3:=\frac{c^{p-1}}{d^{p-1}}\frac{1}{p}\sum_{i=0}^{p-1}\binom{2i}{i}\binom{2(p-1)-2i}{p-1-i}\vartheta_{p,i,p-1-i}.
\end{align*}
It is easy to see that
\begin{align*}
\sum_{k=0}^{p-1}\left( 2k+1\right) \frac{T_k\left( b,c\right) ^4}{d^{2k}}=\Sigma_1+\Sigma_2+\Sigma_3.
\end{align*}

We first evaluate $\Sigma_1$ modulo $p^4$. Applying Lemma \ref{lemma2} modulo $p^4$, we get
\begin{align*}
\Sigma_1\equiv\ & p^2\sum_{i=0}^{p-2} \sum_{j=0}^{p-2-i} \binom{2i}{i}\binom{2j}{j}\frac{c^{j+i}}{d^{i+j}}\frac{\left( -1\right)^{i+j} }{1+i+j}\pmod{p^4}\\
=\ & p^2\sum_{i=0}^{p-2} \sum_{k=i}^{p-2} \binom{2i}{i}\binom{2(k-i)}{k-i}\frac{c^{k}}{d^{k}}\frac{\left( -1\right)^{k} }{1+k}\pmod{p^4}\\
=\ & p^2 \sum_{k=0}^{p-2}\frac{c^{k}}{d^{k}}\frac{\left( -1\right)^{k} }{1+k} \sum_{i=0}^{k}\binom{2i}{i}\binom{2(k-i)}{k-i}\pmod{p^4}.
\end{align*}
Using the following binomial identity (see Gould \cite[(3.90)]{gould}):
\begin{align}\label{identity2}
\sum_{i=0}^k\binom{2k-2i}{k-i}\binom{2i}{i}=4^k,
\end{align} 
we arrive at
\begin{align}
\Sigma_1&\equiv p^2 \sum_{k=0}^{p-2}\frac{(-4c)^{k}}{d^{k}(k+1)}\pmod{p^4}.
\end{align}
We distinguish three cases. If $p\mid c$, we deduce that
\begin{align}\label{sigma1cg+}
\Sigma_1\equiv p^2-p^2\frac{2c}{d}\pmod{p^4}.
\end{align}
If $p\mid b$, we obtain
\begin{align}
\Sigma_1 &\equiv p^2 \sum_{k=0}^{p-2}\frac{1}{(k+1)}\pmod{p^4}\nonumber\\
&\equiv 0\pmod{p^4},\label{sigma1cg++}
\end{align}
where in last step we use \eqref{wlscg}. \\
Now, set $p\nmid c$ and $p\nmid b$. We have
\begin{align*}
\Sigma_1\equiv-\frac{d}{4c}p^2\sum_{k=1}^{p-1}\frac{(-4c)^{k}}{d^{k}k}\pmod{p^4},
\end{align*}
and by \eqref{cg2.9}, we find that
\begin{align*}
\Sigma_1\equiv p^2\frac{Q_p(-4c,d)}{4c} +p^3\frac{1}{4c}\left(d\pounds_2\left(1+\frac{4c}{d}\right)-q_p(d)Q_p(-4c,d)\right)\pmod{p^4}.
\end{align*}
Note that
\begin{align*}
Q_p(-4c,d)&=-dq_p(d)-4cq_p(4c)+b^2q_p(b^2).
\end{align*}
Moreover, using the Fermat quotient, one easily verifies that $q_p(b^2)=2q_p(b)+pq_p(b)^2$. Hence,
\begin{align}
\Sigma_1\equiv p^2\left( \frac{-dq_p(d)+2b^2q_p(b)}{4c} -q_p(4c)\right)&+p^3\frac{1}{4c}\left(d\pounds_2\left(1+\frac{4c}{d}\right)+b^2q_p(b)^2 \right. \nonumber \\& \left. -q_p(d)\left( -dq_p(d)-4cq_p(4c)+2b^2q_p(b)\right) \right)\pmod{p^4}.\label{sigma1cg}
\end{align}

We next evaluate $ \Sigma_2 $ modulo $ p^4 $. Again applying Lemma \ref{lemma2}, we obtain
\begin{align*}
\Sigma_2\equiv\ & p^2\sum_{i=1}^{p-1} \sum_{j=p-i}^{p-1} \binom{2i}{i}\binom{2j}{j}\frac{c^{j+i}}{d^{i+j}(1+i+j)} \pmod{p^4}\\
=\ & p^2 \sum_{i=0}^{p-2} \sum_{k=p}^{p-1+i} \binom{2i}{i}\binom{2(k-i)}{k-i}\frac{c^{k}}{d^{k}}\frac{\left( -1\right)^{k} }{1+k}\pmod{p^4}\\
=\ & p^2 \sum_{i=1}^{p-1} \sum_{k=1}^{i} \binom{2i}{i}\binom{2(p-1)+2(k-i)}{p-1+k-i}\frac{c^{p-1+k}}{d^{p-1+k}}\frac{\left(-1\right)^{p-1+k} }{p+k}\pmod{p^4}.
\end{align*}
We distinguish three cases. If $p\mid c$, we easily get 
\begin{align}\label{sigma2cg+}
\Sigma_2\equiv 0\pmod{p^4}.
\end{align}
Suppose that $p\nmid c$. Since $\binom{2j}{j}\equiv 0\pmod{p}$ for $(p-1)/2<j<p$, and by the Fermat's Little Theorem, we arrive at
\begin{align*}
\Sigma_2&\equiv p^2 \sum_{i=1}^{p-1} \sum_{k=1}^{i} \binom{2i}{i}\binom{2(p-1)-2(i-k)}{p-1-(i-k)}\frac{c^{k}}{d^{k}}\frac{\left(-1\right)^{k} }{k}\pmod{p^4}\\
&=p^2\sum_{k=1}^{p-1} \frac{c^{k}}{d^{k}}\frac{\left(-1\right)^{k} }{k}\sum_{i=0}^{p-1-k} \binom{2i+2k}{i+k}\binom{2(p-1)-2i}{p-1-i}\pmod{p^4}\\
&\equiv p^2\sum_{k=1}^{(p-1)/2} \frac{c^{k}}{d^{k}}\frac{\left(-1\right)^{k} }{k}\sum_{i=0}^{p-1-k} \binom{2i+2k}{i+k}\binom{2(p-1)-2i}{p-1-i}\pmod{p^4}.
\end{align*}



In view of \eqref{Lemmac13} and \eqref{Lemma7c14}, we obtain
\begin{align*}
\Sigma_2
\equiv\ & \frac{p^3}{2}\sum_{k=1}^{(p-1)/2}\frac{4^{2k}(-c)^k}{d^kk}\sum_{i=0}^{(p-1)/2-k}\frac{\binom{2i}{i}}{(k+i)\binom{2(k+i)}{k+i}}\\ &-p^3\sum_{k=1}^{(p-1)/2}\frac{(-c)^k}{d^kk}\sum_{i=0}^{(p-3)/2}\binom{2(k+i)}{k+i}\frac{1}{(2i+1)\binom{2i}{i}}\pmod{p^4}.
\end{align*}
Applying \eqref{lemma71}, we find that
\begin{align*}
\frac{1}{2}\sum_{k=1}^{(p-1)/2}&\frac{4^{2k}(-c)^k}{d^kk}\sum_{i=0}^{(p-1)/2-k}\frac{\binom{2i}{i}}{(k+i)\binom{2(k+i)}{k+i}}\\\equiv& \frac{-H_{(p-1)/2}}{2}\sum_{k=1}^{(p-1)/2}\frac{(-4c)^k}{d^kk}-\frac{1}{2}\sum_{k=1}^{(p-1)/2}\frac{(-4c)^k}{d^kk}\sum_{i=1}^{k-1}\frac{\binom{2i}{i}}{i4^{i}}\pmod p.
\end{align*}
Also, by \eqref{lemma72}, we get
\begin{align*}
-\sum_{k=1}^{(p-1)/2}\frac{(-c)^k}{d^kk}&\sum_{i=0}^{(p-3)/2}\binom{2(k+i)}{k+i}\frac{1}{(2i+1)\binom{2i}{i}}\\\equiv&-\frac{H_{(p-1)/2}}{2}\sum_{k=1}^{(p-1)/2}\frac{(-4c)^k}{d^kk}-\frac{1}{2}\sum_{k=1}^{(p-1)/2}\frac{(-4c)^k}{d^kk}\sum_{i=1}^{k-1}\frac{\binom{2i}{i}}{4^{i}i}\pmod{p}.
\end{align*}
Therefore, combining the above results, and by Lemma \ref{lemma11}, we deduce that
\begin{align*}
\Sigma_2\equiv p^3\sum_{k=1}^{(p-1)/2}\frac{(d+4c)^k}{d^kk^2}\pmod{p^4}.
\end{align*}
Now, if $p\mid b$, we easily find
\begin{align}\label{sigma2cg++}
\Sigma_2\equiv 0\pmod{p^4}.
\end{align}
In otherwise, with \eqref{cgh2}, we conclude that
\begin{equation}\label{sigma2cg}
\Sigma_2\equiv p^3 S_{p-1}^{(2)}\left( \frac{d+4c}{d}\right)  \pmod{p^4}.
\end{equation}

Finally, we evaluate $ \Sigma_3 $ modulo $ p^4 $. In view of \eqref{varformula}, we show that
\begin{align}\label{sg3cg}
\frac{\vartheta_{p,i,p-1-i}}{p}
&\equiv p\left(1-p^2\left(H_{p-1-i}^{(2)}+H_{i}^{(2)}\right)\right)\pmod{p^4}.
\end{align} 
Using Lemma \ref{cgH(2)}, and by \eqref{sg3cg} with identity \eqref{identity2}, we obtain
\begin{align*}
\Sigma_3&\equiv p\frac{c^{p-1}}{d^{p-1}}\sum_{i=0}^{p-1}\binom{2i}{i}\binom{2(p-1)-2i}{p-1-i}\pmod{p^4}\\
&=p\frac{(4c)^{p-1}}{d^{p-1}}\pmod{p^4}.
\end{align*}
Obviously, if $p\mid c$, we have
\begin{align}\label{sigma3cg+}
\Sigma_3\equiv 0\pmod{p^4}.
\end{align}
Also, if $p\mid b$, we obtain
\begin{align}\label{sigma3cg++}
\Sigma_3\equiv p-p\frac{b^2}{4c}\pmod{p^4}.
\end{align}
Combining \eqref{sigma1cg+}, \eqref{sigma2cg+}, and \eqref{sigma3cg+} yields \eqref{congruence1+}. Furthermore, combining \eqref{sigma1cg++}, \eqref{sigma2cg++}, and \eqref{sigma3cg++} leads to \eqref{congruence1++}. \\
For any integer $y$ coprime with $p$, we have $y^{p-1}=1+pq_p(y)$. In addition, with the help of Fermat's Little Theorem, we show that;
\begin{align*}
\frac{1}{y^{p-1}}\equiv 1-pq_p(y)+p^2q_p(y)^2\pmod{p^3}.
\end{align*}
Consequently, for $p\nmid c$ and $p\nmid b$, we get
\begin{align*}
\Sigma_3&\equiv p+p^2\left( q_p(4c)-q_p(d)\right) +p^3\left( q_p(d)^2-q_p(d)q_p(4c)\right) \pmod{p^4}.
\end{align*} 
This, together with \eqref{sigma1cg} and \eqref{sigma2cg}, gives \eqref{congruence1}. This completes the proof.
\end{proof}

\begin{proof}[Proof of Theorem \ref{theorem2}]
With the help of identity \eqref{sunidentity}, we find that
\begin{align*}
\sum_{k=0}^{p-1}\left( 2k+1\right)^3 \frac{T_k\left( b,c\right) ^4}{d^{2k}}&=\sum_{k=0}^{p-1}\frac{\left( 2k+1\right)^3}{d^{2k}}\sum_{i=0}^{k} \sum_{j=0}^{k} \binom{2i}{i}^2\binom{2j}{j}^2\binom{k+i}{2i}\binom{k+j}{2j}d^{2k-i-j}c^{j+i}\\
&=\sum_{i=0}^{p-1} \sum_{j=0}^{p-1} \binom{2i}{i}^2\binom{2j}{j}^2\frac{c^{j+i}}{d^{i+j}}\sum_{k=\max\left( i,j\right) }^{p-1}\left( 2k+1\right)^3\binom{k+i}{2i}\binom{k+j}{2j}.
\end{align*}
By \eqref{1.2}, we obtain
\begin{align*}
\sum_{k=0}^{p-1}\left( 2k+1\right)^3 \frac{T_k\left( b,c\right) ^4}{d^{2k}}&=-\sum_{i=0}^{p-1} \sum_{j=0}^{p-1} \binom{2i}{i}\binom{2j}{j}\frac{c^{j+i}}{d^{i+j}}\frac{\vartheta_{p,i,j}\mu_{p,i,j}}{(1+i+j)(2 + i + j)}\\
&=-\sum_{i=0}^{p-1} \sum_{j=0}^{p-1} \binom{2i}{i}\binom{2j}{j}\frac{c^{j+i}}{d^{i+j}}\vartheta_{p,i,j}\mu_{p,i,j}\left( \frac{1}{1+i+j}-\frac{1}{2+i+j}\right) .
\end{align*}
For $r\in \left\lbrace 1,2\right\rbrace $, define
\begin{align*}
\Sigma_{1,r}&:=\sum_{i=0}^{p-1-r} \sum_{j=0}^{p-1-r-i} \binom{2i}{i}\binom{2j}{j}\frac{c^{j+i}}{d^{i+j}}\frac{\vartheta_{p,i,j}\mu_{p,i,j}}{r+i+j},\\
\Sigma_{2,r}&:=\sum_{i=0}^{p-r} \sum_{j=p+1-r-i}^{p-1} \binom{2i}{i}\binom{2j}{j}\frac{c^{j+i}}{d^{i+j}}\frac{\vartheta_{p,i,j}\mu_{p,i,j}}{r+i+j},\\
\Sigma_{3,r}&:=\frac{c^{p-r}}{d^{p-r}}\frac{1}{p}\sum_{i=0}^{p-r}\binom{2i}{i}\binom{2(p-r-i)}{p-r-j}\frac{c^{j+i}}{d^{i+j}}\vartheta_{p,i,p-i-r}\mu_{p,i,p-i-r}
\end{align*}
and
\begin{align*}
\Sigma_4:=\binom{2(p-1)}{p-1}^2\left(2p-1\right)\sum_{j=0}^{p-2} \binom{2j}{j}^2\frac{c^{p-1+j}}{d^{p-1+j}}\binom{p+j-1}{2j} \frac{\left( j+p\right)\mu_{p,p-1,j}}{p+1+j}\\-\binom{2(p-1)}{p-1}^4(2p-1)^4\left( \frac{c}{d}\right) ^{2p-2}.
\end{align*}
It is easy to show that
\begin{align}\label{theorem2f}
\sum_{k=0}^{p-1}\left( 2k+1\right)^3 \frac{T_k\left( b,c\right) ^4}{d^{2k}}=-\sum_{m=1}^3 \left(\Sigma_{m,1}-\Sigma_{m,2}\right)+\Sigma_4.
\end{align}
From \eqref{Lemmac13} we have $p$ divides $\binom{2(p-1)}{p-1}$, and in view of \eqref{bc11} we easily get
\begin{align}\label{s4}
\Sigma_4 \equiv 0 \pmod{p^3}.
\end{align}
Furthermore, applying Lemma \ref{lemma2} modulo $p^3$, we find that
\begin{align*}
\Sigma_{2,r}&\equiv p^2\sum_{i=0}^{p-r} \sum_{j=p+1-r-i}^{p-1} \binom{2i}{i}\binom{2j}{j}\frac{(-c)^{j+i}}{d^{i+j}}\frac{2+3k+4i(k-i)}{r+i+j} \pmod{p^3}\\
&= p^2\sum_{i=0}^{p-r}\sum_{k=p+1-r}^{p-1+i} \binom{2i}{i}\binom{2(k-i)}{k-i}\frac{(-c)^{k}}{d^{k}}\frac{2+3k+4i(k-i)}{r+k} \pmod{p^3}.
\end{align*}
Since $\binom{2j}{j}\equiv 0\pmod{p}$ for $p/2<j<p$, we deduce
\begin{align*}
\Sigma_{2,r}\equiv0\pmod{p^3}.
\end{align*}
Consequently, we reduce \eqref{theorem2f} as follows
\begin{align}\label{theorem2cgf}
\sum_{k=0}^{p-1}\left( 2k+1\right)^3 \frac{T_k\left( b,c\right) ^4}{d^{2k}}\equiv\Sigma_{1,2}-\Sigma_{1,1}+\Sigma_{3,2}-\Sigma_{3,1}\pmod{p^3}.
\end{align}
We first evaluate $\Sigma_{1,2}-\Sigma_{1,1}$ modulo $p^3$. Using Lemma \ref{lemma2}, we have
\begin{align*}
\Sigma_{1,r}&\equiv p^2\sum_{i=0}^{p-1-r} \sum_{j=0}^{p-1-r-i} \binom{2i}{i}\binom{2j}{j}\frac{(-c)^{j+i}}{d^{i+j}}\frac{2 +3i+3j+4ij}{r+i+j}\pmod{p^3}\\
&=p^2\sum_{i=0}^{p-1-r}\sum_{k=i}^{p-1-r}\binom{2i}{i}\binom{2(k-i)}{k-i}\frac{(-c)^{k}}{d^{k}}\frac{2 +3k+4i(k-i)}{r+k}\pmod{p^3}\\
&=p^2\sum_{k=0}^{p-1-r}\frac{(-c)^{k}}{d^{k}}\frac{1}{r+k}\sum_{i=0}^{k}\binom{2i}{i}\binom{2(k-i)}{k-i}\left(2+3k+4i(k-i)\right)\pmod{p^3}.
\end{align*}
By identity \eqref{identity2+}, we obtain
\begin{align*}
\Sigma_{1,r}\equiv \frac{p^2}{2}\sum_{k=0}^{p-1-r}\frac{(-4c)^{k}}{d^{k}}\frac{(1+k)(4+k)}{r+k} \pmod{p^3}.
\end{align*}
We distinguish some cases. If $p \mid c$, we find that
\begin{align*}
\Sigma_{1,r}\equiv \frac{2p^2}{r} \pmod{p^3},
\end{align*}
and hence
\begin{align}\label{s1-(1-2)+}
\Sigma_{1,2}-\Sigma_{1,1}\equiv -p^2 \pmod{p^3}.
\end{align}
If $p \mid b$, we obtain
\begin{align*}
\Sigma_{1,r}&\equiv \frac{p^2}{2}\sum_{k=0}^{p-1-r}\frac{(1+k)(4+k)}{r+k} \pmod{p^3},
\end{align*}
which gives
\begin{align*}
\Sigma_{1,1}\equiv -\frac{3p^2}{2}\pmod{p^2} \text{ and }\ \Sigma_{1,2}\equiv \frac{p^2}{2}\left(-1-2H_{p-1}\right)\pmod{p^3}.
\end{align*}
Since $H_{p-1}\equiv 0\pmod{p^2}$, we deduce that
\begin{align}\label{s1-(1-2)++}
\Sigma_{1,2}-\Sigma_{1,1}\equiv p^2 \pmod{p^3}.
\end{align}
Now, set $p\nmid c$ and $p\nmid b$. It is follows that
\begin{align*}
\Sigma_{1,1}&\equiv \frac{p^2}{2}\sum_{k=0}^{p-2} \frac{(-4c)^k}{d^k}(4+k)\pmod{p^3}\\
&\equiv p^2\frac{d}{2b^2}  \pmod{p^3}.
\end{align*}
Note that
\begin{align*}
\sum_{k=0}^{p-3}(-4x)^{k}\frac{(1+k)(4+k)}{2+k}=
\sum_{k=1}^{p-1}(-4x)^{k-2}(1+k)-2\sum_{k=1}^{p-1}(-4x)^{k-2}\frac{1}{k}.
\end{align*}
Using the above identity and with help of \eqref{cg2.9}, we obtain
\begin{align*}
\Sigma_{1,2}\equiv -p^2\frac{d^2}{8cb^2}-p^2\frac{d}{16c^2}\left(dq_p(d)+4cq_p(4c)-2b^2q_p(b)\right)  \pmod{p^3}.
\end{align*}
Therefore
\begin{align}\label{s1-(1-2)}
\Sigma_{1,2}-\Sigma_{1,1}\equiv -p^2\frac{d}{8c}-p^2\frac{d}{16c^2}\left(dq_p(d)+4cq_p(4c)-2b^2q_p(b)\right) \pmod{p^3}.
\end{align}

It remains to evaluate $\Sigma_{3,2}-\Sigma_{3,1}$ modulo $p^3$. With the help of \eqref{varformula}, we get
\begin{align*}
\Sigma_{3,r}\equiv p\left( -1\right) ^{r-1}\frac{c^{p-r}}{d^{p-r}}\sum_{i=0}^{p-r}\binom{2i}{i}\binom{2(p-r-i)}{p-r-i}\theta_{p,i,p-i-r} \pmod{p^3},
\end{align*}
where $\theta_{p,i,k}:=2+3(p-1+k)+4i(k+p-1-i)$. When $p \mid c$ we have
\begin{align*}
\Sigma_{3,r}\equiv 0\pmod{p^3}.
\end{align*}
This, together with \eqref{s1-(1-2)+} and in view of \eqref{theorem2cgf} we obtain \eqref{theorem21}. \\
Applying identity \eqref{identity2+}, we find that
\begin{align*}
\Sigma_{3,1}&\equiv p\frac{c^{p-1}}{d^{p-1}}\sum_{i=0}^{p-1}\binom{2i}{i}\binom{2(p-1-i)}{p-1-i}(-4i^2+4ip- 4i+3p-1)\pmod{p^3}\\
&\equiv \frac{3p^2}{2}\pmod{p^3}.
\end{align*}
On other hand side, we have
\begin{align*}
\Sigma_{3,2}&\equiv -p\frac{c^{p-2}}{d^{p-2}}\sum_{i=0}^{p-2}\binom{2i}{i}\binom{2(p-2-i)}{p-2-i}\left(-4i^2+4ip-8i+3p-4\right)\pmod{p^3}.
\end{align*}
Suppose that $p\mid b$. Using \eqref{identity2+} and \eqref{identity2}, we easily get
\begin{align*}
\Sigma_{3,2}\equiv -p+ \frac{p^2}{2}\pmod{p^3},
\end{align*}
and hence
\begin{align*}
\Sigma_{3,2}-\Sigma_{3,1}\equiv -p-p^2 \pmod{p^3}.
\end{align*}
Combining the above congruence with \eqref{s1-(1-2)++} and in view of \eqref{theorem2cgf} we conclude \eqref{theorem22}.\\
Now, we consider $p\nmid c$ and $p\nmid b$. By \eqref{identity2+} and \eqref{identity2}, we obtain
\begin{align*}
\Sigma_{3,2}&\equiv -\frac{dp^2}{8c}+p\frac{d}{4c}\left(1+pq_p(4c)-pq_p(d)\right)\pmod{p^3}\\
&\equiv p\frac{d}{4c}+p^2\left( q_p(4c)-q_p(d)-\frac{d}{8c}\right)\pmod{p^3}.
\end{align*}
Hence
\begin{align*}
\Sigma_{3,2}-\Sigma_{3,1}\equiv p\frac{d}{4c}+p^2\frac{d}{4c}\left(q_p(4c)-q_p(d)-\frac{12c}{2d}-\frac{1}{2}\right)\pmod{p^3}.
\end{align*}
This, together with \eqref{s1-(1-2)} and in view of \eqref{theorem2cgf} yields \eqref{theorem23}. This concludes the proof.
\end{proof}

\begin{proof}[Proof of Theorem \ref{theorem3}]Using identity \eqref{sunidentity}, we get
\begin{align*}
&\sum_{k=0}^{p-1}(2k+1)(-1)^k\frac{T_k(b,c)^4}{d^{2k}}\\
&=\sum_{i=0}^{p-1}\sum_{j=0}^{p-1}\binom{2i}{i}^2\binom{2j}{j}^2\left(\frac{c}{d}\right)^{i+j}\sum_{k=\max(i,j)}^{p-1}\binom{k+i}{2i}\binom{k+j}{2j}(2k+1)(-1)^k.
\end{align*}
Set
\begin{align*}
\Sigma_1&:=\binom{2(p-1)}{p-1}^2(2p-1)\sum_{i=0}^{p-1}\binom{2i}{i}^2\binom{p+i-1}{2i}\left(\frac{c}{d}\right)^{p-1+i}, \\
\Sigma_2&:=\sum_{i=0}^{p-1}\sum_{j=0}^{p-2}\binom{2i}{i}^2\binom{2j}{j}^2\left(\frac{c}{d}\right)^{i+j}\sum_{k=\max(i,j)}^{p-1}\binom{k+i}{2i}\binom{k+j}{2j}(2k+1)(-1)^k.
\end{align*}
It is easy to show that
\begin{align*}
\Sigma_1\equiv -p^2\pmod{p^3}.
\end{align*}
In addition, by \eqref{theorem3cg1}, we get
\begin{align*}
\Sigma_2\equiv p\sum_{i=0}^{p-1}\sum_{j=0}^{p-2}\binom{2i}{i}\binom{2j}{j}\binom{i+j}{j}\left(\frac{-c}{d}\right)^{i+j}\pmod{p^3}.
\end{align*}
By Lucas' Theorem and Fermat's Little Theorem, it follows that
\begin{align*}
p\binom{2(p-1)}{p-1}\sum_{i=0}^{p-1}\binom{2i}{i}\binom{p+j-1}{j}\left(\frac{-c}{d}\right)^{p-1+i}
\equiv -p^2 \pmod{p^3}.
\end{align*}
Thus
\begin{align*}
\sum_{k=0}^{p-1}(2k+1)(-1)^k\frac{T_k(b,c)^4}{d^{2k}}\equiv p\sum_{i=0}^{p-1}\sum_{j=0}^{p-1}\binom{2i}{i}\binom{2j}{j}\binom{i+j}{j}\left(\frac{-c}{d}\right)^{i+j}\pmod{p^3}.
\end{align*}
Since $\binom{2i}{i} \equiv 0 \pmod{p}$ for $\frac{p}{2} < i < p$ and $\binom{i+j}{j} \equiv 0 \pmod{p}$ for $p - i \leq j \leq p - 1$, then we have 
\begin{align*}
\sum_{k=0}^{p-1}(2k+1)(-1)^k\frac{T_k(b,c)^4}{d^{2k}}\equiv\ & p\sum_{i=0}^{p-1}\sum_{j=0}^{p-1-i}\binom{2i}{i}\binom{2j}{j}\binom{i+j}{j}\left(\frac{-c}{d}\right)^{i+j}\pmod{p^3}\\
=\ & p\sum_{i=0}^{p-1}\sum_{i=k}^{p-1}\binom{2i}{i}\binom{2(k-i)}{k-i}\binom{k}{i}\left(\frac{-c}{d}\right)^{k}\pmod{p^3}\\
= \ & p\sum_{k=0}^{p-1}\left(\frac{-c}{d}\right)^{k}\sum_{i=0}^{k}\binom{2i}{i}\binom{2(k-i)}{k-i}\binom{k}{i}\pmod{p^3}.
\end{align*}
Using the following binomial identity (see \cite[Lemma 4.2]{guo}):
\begin{align}\label{catalanf}
\sum_{i=0}^{k}\binom{2i}{i}\binom{2(k-i)}{k-i}\binom{k}{i}=\sum_{i=0}^{k}\binom{2i}{i}^2\binom{i}{k-i}(-4)^{k-i},
\end{align}
we obtain
\begin{align*}
\sum_{k=0}^{p-1}(2k+1)(-1)^k\frac{T_k(b,c)^4}{d^{2k}}&\equiv p \sum_{i=0}^{(p-1)/2}\binom{2i}{i}^2\sum_{k=i}^{p-1}\left(\frac{-c}{d}\right)^{k}\binom{i}{k-i}(-4)^{k-i}\pmod{p^3},\\
&\equiv p \sum_{i=0}^{(p-1)/2}\binom{2i}{i}^2\frac{(-c)^i}{d^i}\sum_{k=0}^{i}\binom{i}{k}\frac{(4c)^k}{d^k}\pmod{p^3}.
\end{align*}
Finally, applying the binomial theorem yields \eqref{theorem3-cg-1}.

Similarly, using \eqref{theorem3cg1+} and the same steps as in the proof above, we obtain modulo $ p^3 $ that
\begin{align*}
&\sum_{k=0}^{p-1}(2k+1)^3(-1)^k\frac{T_k(b,c)^4}{d^{2k}}\\
&\equiv -p\sum_{k=0}^{p-1}\left(\frac{-c}{d}\right)^{k}\sum_{i=0}^{k}\binom{2i}{i}\binom{2(k-i)}{k-i}\binom{k}{i}(3+4k+4(k-i)i)\\
&=-p\sum_{k=0}^{p-1}\left(\frac{-c}{d}\right)^{k}(3+4k)\sum_{i=0}^{k}\binom{2i}{i}\binom{2(k-i)}{k-i}\binom{k}{i}\\
&\quad-4p\sum_{k=0}^{p-1}\left(\frac{-c}{d}\right)^{k}\sum_{i=0}^{k}\binom{2i}{i}\binom{2(k-i)}{k-i}\binom{k}{i}(k-i)i.
\end{align*}
In view of \eqref{catalanf} and \eqref{catalan-new}, we obtain modulo $ p^3 $ that
\begin{align*}
&\sum_{k=0}^{p-1}(2k+1)^3(-1)^k\frac{T_k(b,c)^4}{d^{2k}}\\
&\equiv -p\sum_{i=0}^{(p-1)/2}\binom{2i}{i}^2\sum_{k=i}^{p-1}\left(\frac{-c}{d}\right)^{k}\binom{i}{k-i}(-4)^{k-i}\left( 3+4k-2(k-i)\right)\\
&= -p\sum_{i=0}^{(p-1)/2}\binom{2i}{i}^2\left(\frac{-c}{d}\right)^{i}\sum_{k=0}^{i}\binom{i}{k}\left(\frac{4c}{d}\right)^{k}\left( 3+2k+4i\right).
\end{align*}
Finally, applying the binomial theorem leads to \eqref{theorem3-cg-2}. This completes the proof.
\end{proof}

\begin{Acks}
This paper is partially supported by DGRSDT grant $n^\circ$ C0656701. The authors are grateful to the referee for reading the manuscript carefully and making helpful suggestions that increased the paper's quality.
\end{Acks}

\end{document}